\numberwithin{equation}{section}
\newtheorem*{theoremA}{Theorem A}
\newtheorem*{theoremB}{Theorem B}
\newtheorem{prop}{Proposition}[section]
\newtheorem{lemm}{Lemma}[section]
\newcommand{\cy}[1]{\left\langle#1\right\rangle}
\renewenvironment{proof}[1][\indent\proofname]{\par
  \pushQED{\qed}%
  \normalfont \topsep6\p@\@plus6\p@\relax
  \trivlist
  \item[\hskip\labelsep
        \bfseries
    #1\@addpunct{.}]\ignorespaces
}{\popQED\endtrivlist\@endpefalse}
\begin{document}

\title{Character degrees of normally monomial $p$-groups of maximal class
}
\author{
Dongfang Yang, Heng Lv\footnote{Corresponding author.
\newline \indent ~~Email addresess: dfyang1228@163.com(D. Yang), lvh529@163.com(H. Lv)
\newline \indent ~~2010 AMS Mathematics Subject Classification: 20C15 20D15
\newline \indent ~~This research supported by the National Natural Science Foundation of China (No.11971391, 12071376). The first author supported by Chinese Scholarship Council.
}
\\
\small{School of Mathematics and Statistics, Southwest University, Chongqing, China}
}

\date{}
\maketitle

    \noindent\textbf{Abstract.}  A finite group $G$ is $normally ~monomial$ if all its irreducible characters are induced from linear characters of normal subgroups of $G$.
   In this paper, we determine all possible irreducible character degree sets of normally monomial 5-groups of maximal class.  Moreover,  we give an upper bound for the largest irreducible character degree of normally monomial $p$-groups of maximal class in terms of $p$.

    \vskip 5mm
\noindent\textbf{Key words:} $p$-groups of maximal class, normally monomial $p$-groups,  character degrees.

\section{Introduction}

Let $G$ be a finite group.
In \cite{Isaacs2}, it is proved that any power of prime $p$ which contains 1 can occur as the set of irreducible character degrees of some $p$-groups of class 2.
On the other hand, the situation for the "dual" type of groups, the $p$-groups of maximal class, is still unclear.
Recall that a $p$-group of order $p^n\ge p^3$  is of maximal class if it has class $n-1$.
A $p$-group of maximal class always has a nonabelian factor group of order $p^3$ and hence
an irreducible character of degree $p$.
Especially, all 2-groups of maximal class have irreducible character degree set cd$(G)=\{1,2\}$ and all 3-groups of maximal class have irreducible character degree set cd$(G)=\{1,3\}$ or $\{1,3,3^2\}$.


A group  $G$ is $normally$ $monomial$ if every irreducible character of $G$ is induced by a linear character of a normal subgroup of $G$.
M. Slattery proved in \cite{Slattery 2010} that if $G$ is a normally monomial $5$-group of maximal class, then the irreducible character degree set cd$(G)$ is either $\{1,5,5^2,5^4\}$,  $\{5^i~|~0\le i\le k\}$ for some $k\ge 1$, or the set obtained from one of the mentioned sets by removing the degree $5^2$.
He also obtained, under additional assumptions (see \cite[Theorem 4.2]{Slattery 2010}), some further restriction for the irreducible character degree set cd$(G)$.
In this paper, we settle the problem by giving a complete list of the possible character degree sets of normally monomial 5-groups of maximal class.

\begin{theoremA}\label{ThmA}
Let $G$ be a normally monomial $5$-group of maximal class.
Then the irreducible character degree set $\mathrm{cd}(G)$ of $G$ is either $\{1,5\}$, $\{1,5,5^2\}$, $\{1,5,5^3\}$ or $\{1,5,5^2,5^3\}$.
Moreover, each one of the mentioned character degree sets occurs as cd$(G)$ for a suitable normally monomial 5-group of maximal class.
\end{theoremA}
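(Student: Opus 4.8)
The plan is to combine Slattery's classification (recalled above) with an upper bound argument that excludes every degree exceeding $5^3$, and then to verify that the four surviving sets are genuinely realized. Slattery's theorem already reduces the possible degree sets to $\{1,5,5^2,5^4\}$, the ``initial segments'' $\{5^i : 0 \le i \le k\}$ with $k \ge 1$, and the sets obtained from these by deleting $5^2$. Matching these against the target list, everything is accounted for once I show that no irreducible character of a normally monomial $5$-group of maximal class has degree $\ge 5^4$: this kills $\{1,5,5^2,5^4\}$ (and its truncation $\{1,5,5^4\}$) outright, and forces $k \le 3$ in the initial-segment case, leaving precisely $\{1,5\}$, $\{1,5,5^2\}$, $\{1,5,5^2,5^3\}$ together with the deletion $\{1,5,5^3\}$. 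Since a maximal-class $p$-group always has a nonabelian quotient of order $p^3$, the degree $5$ is always present, so no further degenerate sets appear.

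For the bound, fix $\chi \in \ir{G}$ with $\chi = \lambda^G$ for a linear $\lambda \in \ir{N}$, $N \trianglelefteq G$; then $\chi(1) = [G:N]$. Because $\lambda$ is linear, $N' \le \ker\lambda$, and since $N'$ is characteristic in $N$ hence normal in $G$ it lies in $\ker\chi = \bigcap_{g \in G}\ker(\lambda^g)$. Thus $\chi$ is inflated from $G/N'$, where the image of $N$ is an \emph{abelian} normal subgroup of index $[G:N] = \chi(1)$; so every character degree is the index of an abelian normal section. First I would invoke $\chi(1)^2 \le [G:Z(G)] = 5^{\,n-1}$ (with $|G| = 5^n$ and $|Z(G)| = 5$): a degree $5^4$ forces $n \ge 9$. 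Then, using the structure theory of maximal-class $p$-groups — that the terms $G_i$ of the lower central series essentially exhaust the normal subgroups lying in $G_2 = [G,G]$, so that the relevant quotients remain of maximal class — I would reduce to the following statement: a maximal-class $5$-group of order $5^n$, $n \ge 9$, cannot possess an abelian normal subgroup of index $5^4$ carrying a core-free linear character with full inertia. Standard bounds on the order of abelian (normal) subgroups of maximal-class $p$-groups immediately confine $n$ to a short list of borderline values, which are then eliminated by a direct analysis of the degree of commutativity and of the two-step centralizers $C_G(G_i/G_{i+2})$.

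The hardest step, and the true content of the necessity direction, is precisely this elimination of the borderline orders: the crude abelian-subgroup bound only narrows $n$ to the range around $9$ and $10$, and closing the gap requires the finer fact that abelian \emph{normal} subgroups of a maximal-class $5$-group are considerably smaller than abelian subgroups in general, together with the constraint that the inducing character be core-free (so that $N'$ is a genuine term of the lower central series rather than an exceptional normal subgroup). This is the $p = 5$ instance of the general upper bound recorded in the abstract, and it is where the arithmetic peculiar to $p = 5$, as opposed to larger primes, enters.

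For the realizability (the ``moreover'' clause) I would exhibit an explicit normally monomial maximal-class $5$-group for each set: the extraspecial group of order $5^3$ realizes $\{1,5\}$; a maximal-class group of order $5^5$ realizes $\{1,5,5^2\}$; and two maximal-class groups of order $5^7$ (the smallest order admitting a degree $5^3$, since $5^3 \cdot 5^3 = 5^6 \le 5^{\,n-1}$ needs $n \ge 7$) realize $\{1,5,5^2,5^3\}$ and $\{1,5,5^3\}$ respectively. In each case I would write down a power-commutator presentation, check maximal class by computing the lower central series, verify normal monomiality by displaying, for every nonlinear irreducible character, a normal subgroup and a linear character inducing it, and read off $\cd{G}$. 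The only delicate construction is the one omitting $5^2$, where the normal subgroup lattice must be arranged so that no abelian normal section has index exactly $5^2$ with the requisite inertia.
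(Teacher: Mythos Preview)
Your reduction is sound: once $b(G)\le 5^3$ is established, the four sets follow immediately (Slattery is not even needed, since $5\in\cd{G}$ always and the subsets of $\{1,5,5^2,5^3\}$ containing $\{1,5\}$ are exactly the four listed). The bound $\chi(1)^2\le[G:Z(G)]$ reducing to $n\ge 9$ is also correct.

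The genuine gap is in the step you flag as hardest. Your plan there is to invoke ``standard bounds on abelian normal subgroups'' together with a ``direct analysis of the degree of commutativity and the two-step centralizers'', but you do not say what these are, and the known structural bounds for maximal-class $p$-groups do not by themselves yield $b(G)\le 5^3$: they make no use of normal monomiality, and that hypothesis must enter essentially (otherwise one would be proving the bound for \emph{all} maximal-class $5$-groups). In particular, observing that $G_4/G_4'$ is abelian normal of index $5^4$ in $G/G_4'$ only gives $b(G/G_4')\le 5^4$, not the strict inequality you need. The paper's actual argument is quite different from what you sketch: the key technical input is a new bound $|G_1'|\le p^{\,p-2}$ valid for $G\in\mathcal{M}$ with $\mathrm{cl}(G_1)=2$ (Proposition~\ref{bound}), whose proof exploits normal monomiality through a character-theoretic lemma (Lemma~\ref{lemma2}) showing ${G_{t-1}}'\le[G_t,\langle s_i\rangle]$ for the least $t$ with $G_t$ abelian. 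This bound is then leveraged to prove $\mathrm{cl}(G_1)\le 2$ whenever $|G|\ge 5^{10}$ (Lemma~\ref{nilpotent class}), and a rather delicate commutator computation using the generators $s_i$ and the identities ${C_{ij}}^s=C_{ij}C_{i+1,j}C_{i,j+1}C_{i+1,j+1}$ disposes of $|G|=5^8,5^9$ separately. Once $\mathrm{cl}(G_1)\le 2$, one gets $|G_1:Z(G_1)|\le 5^4$ and hence $b(G_1)\le 5^2$, $b(G)\le 5^3$. None of this is visible in your outline; the phrases ``degree of commutativity'' and ``two-step centralizers'' point in roughly the right direction but do not capture the mechanism by which normal monomiality constrains $|G_1'|$.

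For realizability, your plan is workable but the paper is more economical: it builds a single group of order $5^8$ in $\mathcal{M}$ with $\cd{G}=\{1,5,5^2,5^3\}$ (Example~1, verified via Lemma~\ref{lem1}), obtains $\{1,5\}$ and $\{1,5,5^2\}$ as quotients $G/G_1'$ and $G/G_2'$, and cites Slattery for $\{1,5,5^3\}$.
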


Although the irreducible character degree set of a $p$-group of maximal class is still unclear, A. Mann \cite{Mann 2016} showed that if a $p$-group $G$ of maximal class has an irreducible character of degree larger than $p$ $(\mathrm{respectively}$~$p^2)$, then $G$ must have an irreducible character of degree at most $p^{\frac{p+1}{2}}$ $(\mathrm{respectively}~p^{\frac{p+3}{2}})$.
If $G$ is a normally monomial $p$-group of maximal class, we will show that the largest irreducible character degree of $G$ can be bounded when $G$ has order at least $p^{4p-10}$.
We denote by $b(G)$ the largest irreducible character degree of the group $G$.

\begin{theoremB}\label{thmB}
  Let $G$ be a normally monomial $p$-group of maximal class. If $|G|\ge p^{4p-10}$, then $b(G)\le p^{\frac{p+1}{2}}$.

  \end{theoremB}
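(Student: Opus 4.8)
The plan is to analyze the structure of a normally monomial $p$-group $G$ of maximal class and to show that once $|G|$ is large enough, the normal monomiality forces the existence of an irreducible character induced from a linear character of a normal subgroup whose index is small. Recall that for a $p$-group of maximal class of order $p^n$, the lower central series produces a well-understood chain of characteristic subgroups $G = G_0 \supset G_1 \supset \cdots \supset G_{n-1} \supset 1$ with $|G_i : G_{i+1}| = p$ for $i \ge 1$, and that $G_1$ (the first maximal subgroup containing all commutators) together with the maximal subgroups plays a controlling role. An irreducible character $\chi$ of degree $p^d$ that is monomial is induced from a linear character $\lambda$ of a normal subgroup $N$ with $|G:N| = p^d$; by normal monomiality every $\chi \in \mathrm{Irr}(G)$ arises this way.

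First I would recall Mann's dichotomy quoted just above: if $G$ has \emph{any} irreducible character of degree larger than $p$, then $G$ already has an irreducible character of degree at most $p^{(p+1)/2}$. So $b(G) > p^{(p+1)/2}$ can only be excluded by ruling out irreducible characters whose degree \emph{exceeds} $p^{(p+1)/2}$; equivalently I must show that no faithful (or high-degree) irreducible character of degree $p^d$ with $d > (p+1)/2$ can survive in a normally monomial group of maximal class once $n \ge 4p-10$. The key constraint from normal monomiality is that a normal subgroup $N$ of index $p^d$ possessing a linear character $\lambda$ with $\lambda^G$ irreducible must satisfy the inertia condition $I_G(\lambda) = N$, and $\lambda$ must be invariant under no larger subgroup. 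Because $G$ has maximal class, every normal subgroup sits in the chain $\{G_i\}$ except near the top, so the index $|G:N|$ and the commutator structure tightly restrict how large the stabilizer of $\lambda$ can be.

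The core of the argument is to bound the degree $p^d$ of an induced irreducible character $\lambda^G$ in terms of the breadth of the corresponding normal subgroup. Since $\lambda$ is linear on $N \trianglelefteq G$, it is trivial on $N' = [N,N]$ and on $[G,N] \cap \ker\lambda$; irreducibility of $\lambda^G$ forces $[G,N]$ to be large relative to $N$, which in a maximal-class group pushes $N$ far down the lower central series. The plan is to combine two ingredients: (i) the commutator/jump structure of maximal class groups, which says that $[G,G_i] = G_{i+1}$ for the relevant range and that the two-step centralizers $G_i = C_G(G_{i-1}/G_{i+1})$ are well controlled for $i \ge 2$ (by the standard theory, e.g. the fact that $G_1$ has a bounded number of distinguished maximal subgroups when $p$ is large), and (ii) the numerical inequality coming from $|G:N| = p^d$ together with $|G| = p^n \ge p^{4p-10}$. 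I would count how far $N$ must lie below $G_1$ for $\lambda^G$ to be both irreducible and of degree exceeding $p^{(p+1)/2}$, and show that the subgroup $C = C_G(N/[G,N])$ then has index too small to be compatible with the hypothesis $n \ge 4p-10$, yielding a contradiction. Concretely, I expect the threshold $4p-10$ to emerge as roughly $2\cdot\frac{p+1}{2} + (\text{a linear-in-}p\text{ correction})$ coming from the fact that an irreducible of degree $p^{(p+3)/2}$ or larger would need a normal subgroup deep enough that the uniserial action of $G$ on the lower central factors overlaps the abelian ``tail'' $G_1$, which is impossible when the class is large.

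The hard part will be controlling the inertia subgroups: to rule out a linear character $\lambda$ of a normal subgroup $N$ with very small index whose induction is irreducible and of large degree, I must show that \emph{any} such $N$ of index greater than $p^{(p+1)/2}$ is forced so low in the series that $G/\ker(\lambda^G)$ would itself be a maximal class quotient of order exceeding $p^{4p-10}$ carrying a character of degree incompatible with Mann's bound. This requires a careful case analysis of how the stabilizer $I_G(\lambda)$ can equal $N$ — i.e. showing that linear characters of small-index normal subgroups are necessarily $G$-invariant (hence induce reducibly), which rests on the fact that in a maximal class group of large order the action of $G$ on $N/[G,N]$ has many fixed points unless $N$ is deep. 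I anticipate that the main obstacle is handling the atypical behavior near the degree-$p^2$ and degree-$p^3$ thresholds — precisely the range where the exceptional sets in Theorem A and Slattery's restrictions live — and reconciling the general bound with the existence of the $\{1,5,5^3\}$-type degree sets; the hypothesis $|G| \ge p^{4p-10}$ is exactly what lets me bypass these small-order exceptions.
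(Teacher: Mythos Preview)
Your proposal does not contain a workable argument, and the strategy you sketch diverges from the paper's in a way that leaves the crucial step unaddressed.

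First, a misreading: Mann's result says that if $G$ has an irreducible character of degree larger than $p$, then $G$ also has one of degree at most $p^{(p+1)/2}$. This guarantees the \emph{existence} of a small degree; it says nothing about the absence of large ones, so it gives no leverage toward bounding $b(G)$. Your sentence ``$b(G) > p^{(p+1)/2}$ can only be excluded by ruling out irreducible characters whose degree exceeds $p^{(p+1)/2}$'' is a tautology, not a consequence of Mann.

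Second, the inertia-group analysis you outline never becomes concrete. You want to show that a linear character $\lambda$ of a normal subgroup $N=G_d$ with $d>(p+1)/2$ cannot satisfy $I_G(\lambda)=N$. But the only tool you invoke is that ``the action of $G$ on $N/[G,N]$ has many fixed points unless $N$ is deep'', and in a $p$-group of maximal class $[G,G_d]=G_{d+1}$, so $N/[G,N]$ has order $p$ regardless of depth; there is no ``many fixed points'' phenomenon to exploit. You also refer to the ``abelian tail $G_1$'', but $G_1$ is not abelian in general --- indeed, controlling how far $G_1$ is from abelian is exactly the heart of the problem.

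The paper's route is structural rather than character-by-character. The key step (Proposition~\ref{bound}) is that normal monomiality forces $|G_1'|\le p^{p-2}$ whenever $\mathrm{cl}(G_1)=2$; this is where normal monomiality is genuinely used, via Lemma~\ref{lemma2}, which shows that a hypothetical large $G_1'$ would produce a nonlinear $\theta\in\mathrm{Irr}(G_1)$ whose induction $\theta^G$ cannot be induced from any linear character of a normal subgroup. Combining $|G_1'/\gamma_3(G_1)|\le p^{p-2}$ with the Leedham-Green--McKay bound $|\gamma_3(G_1)|\le p^{2p-8}$ gives $|G_1'|\le p^{3p-10}$; hence if $|G|\ge p^{4p-10}$ one gets $G_1'\le G_p$, and a short argument (Lemma~\ref{nilpotent class}) then forces $\mathrm{cl}(G_1)\le 2$. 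Finally, $\mathrm{cl}(G_1)\le 2$ with $|G_1'|\le p^{p-2}$ yields $|G_1:Z(G_1)|\le p^{p-1}$, so $b(G_1)\le p^{(p-1)/2}$ and $b(G)\le p^{(p+1)/2}$. The threshold $4p-10=(p-2)+(2p-8)+p$ is thus arithmetic, not the ``$2\cdot\frac{p+1}{2}+(\text{correction})$'' you conjecture. None of this structure --- in particular the reduction to bounding $|G_1'|$ --- appears in your plan.
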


\section{Notations}
Let $G$ be a $p$-group of maximal class and of order $p^n$. 
We write $G_i=\gamma_i(G)$ for the $i$-th terms of the lower central series, for $2\le i\le n-1$, and $G_1=C_G(G_2/G_4)$. Then
\[
G=G_0>G_1>G_2>\cdots >G_n=1
\]
is a chief series for $G$ and the sections $G_i/G_{i+1}$ are all of order $p$.
 The subgroup $G_1$ is called the $major~centralizer$ of $G$ in \cite{Mann 2016}.
It is a regular $p$-group and so $[A,B^p]=[A,B]^p$ for all $A,B\unlhd G_1$.
If $|G|\ge p^{p+2}$, then ${G_i}^p=G_{i+p-1}$.
It is shown \cite[Chapter 3]{Leedham} that the structure of a $p$-group $G$ of maximal class is determined by the structure of the
major centralizer $G_1$.

Assume that $n\ge p+2$.
 Let $s,s_1$ denote elements of $G$ with $s\in G-G_1$ and $s_1\in G_1-G_2$.
 For $i\ge 2$, let $s_i=[s_{i-1},s]$.
 Observe that $s_i\in G_i-G_{i+1}$ and that $G=\cy{s,s_1}G'=\cy{s,s_1}$.
 In \cite[Theorem 4]{Mann 2016}, it is proved that if $G$ is a $p$-group of maximal class, then the major centralizer $G_1$ has at most $p$ generators.
 Since a $p$-group of maximal class with order at least $p^{p+2}$ has no section which is isomorphic to $C_p\wr C_p$,
 from \cite[Theorem 4(c)]{Mann 2016}, it follows that $G_1$ has at most $p-1$ generators and hence $G_1=\cy{s_1,\cdots,s_{p-1}}$.
 Also,
 for $2\le i\le n-3$, set $H=\cy{G_i,s}$.
Then $H$ is also a $p$-group of maximal class and order $p^{n-i+1}$ and $H_j=G_{i+j}$ for $j\ge 1$.
 Therefore, we similarly have that $G_i=\cy{s_i,s_{i+1},\cdots,s_{i+p-2}}$ for $i\le n-p-1$.
  Write $C_{ij}=[s_i,s_j]$.
 Since $s_i=[s_{i-1},s]$ implies ${s_{i-1}}^s=s_{i-1}s_i$, then
 \[(*)
 \begin{split}
 {C_{ij}}^s&=[s_i,s_j]^s=[{s_i}^s,{s_j}^s]=
 [s_is_{i+1},s_js_{j+1}]\\
 &=[s_i,s_j]^{s_{i+1}s_{j+1}}[s_{i+1},s_j]^{s_{j+1}}
 [s_i,s_{j+1}]^{s_{i+1}}[s_{i+1},s_{j+1}]\\
 &=C_{ij}^{s_{i+1}s_{j+1}}C_{i+1,j}^{s_{j+1}}C_{i,j+1}^{s_{i+1}}C_{i+1,j+1}.
 \end{split}
 \]
 These notations will be consistently applied in the rest of the paper to each $p$-group of maximal class.

 For convenience, we let
 \[
  \mathcal{M}=\{G~|~G~\mathrm{is~a~normally~monomial}~p\mathrm{-group~of~maximal~class}\}.
   \]

In the following, by "group" we will always mean "finite group". We also refer to \cite{Isaacs} for standard notation in character theory of finite groups.

\section{Proofs}

We state some basic properties of normally monomial $p$-groups of maximal class.

\begin{lemm}\label{Inheritage}
Let $G\in\mathcal{M}$. If $N\unlhd G$ of index at least $p^3$, then $G/N\in\mathcal{M}$.
\end{lemm}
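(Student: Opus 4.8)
The plan is to verify two things separately, namely that $G/N$ is again a $p$-group of maximal class and that it is normally monomial, and then conclude $G/N\in\mathcal{M}$. The index hypothesis $|G:N|\ge p^3$ will be needed only for the maximal class part; the descent of normal monomiality in fact holds for \emph{every} quotient of a normally monomial group, as I explain below.

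For the maximal class property I would work entirely with the lower central series, using that $\gamma_i(G)=G_i$ for $i\ge 2$ and that the $G_i$ form a chief series with factors of order $p$. Writing $|G/N|=p^m$ with $m\ge 3$, set $c$ to be the largest index with $\gamma_c(G)\not\subseteq N$; since $\gamma_i(G/N)=\gamma_i(G)N/N$, the nilpotency class of $G/N$ is exactly $c$. A group of order $p^m$ has class at most $m-1$, so $c\le m-1$. Conversely $\gamma_{c+1}(G)\subseteq N$ gives $|N|\ge|\gamma_{c+1}(G)|$; here the hypothesis $|G/N|\ge p^3$ forces $c\ge 2$ (otherwise $G_2\subseteq N$ and $|G/N|\le p^2$), and then $\gamma_{c+1}(G)=G_{c+1}$ has order $p^{n-c-1}$, whence $m\le c+1$. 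The two bounds give $c=m-1$, and comparing orders even yields $N=G_m=\gamma_m(G)$. Thus $G/N\cong G/\gamma_m(G)$ has order $p^m\ge p^3$ and class $m-1$, so it is of maximal class.

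For normal monomiality I would take $\bar\chi\in\ir{G/N}$ and inflate it to $\chi\in\ir{G}$ with $N\subseteq\ker\chi$. Since $G$ is normally monomial, $\chi=\lambda^G$ for some linear character $\lambda$ of a normal subgroup $H\unlhd G$. The crucial observation is that the inducing subgroup automatically contains $N$: from $\ker(\lambda^G)=\bigcap_{x\in G}(\ker\lambda)^x\subseteq\ker\lambda\subseteq H$ we get $N\subseteq\ker\chi\subseteq\ker\lambda\subseteq H$. Consequently $\lambda$ is trivial on $N$ and deflates to a linear character $\bar\lambda$ of $\bar H=H/N$, which is normal in $G/N$ because $N\subseteq H\unlhd G$. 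As induction commutes with this deflation, $\bar\lambda^{G/N}=\bar\chi$, so $\bar\chi$ is induced from a linear character of a normal subgroup; hence $G/N$ is normally monomial.

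The conceptual heart, and essentially the only place requiring care, is the observation that $N$ lies inside every inducing subgroup $H$: this is exactly what lets normal monomiality descend to the quotient, and it rests on the identity $\ker(\lambda^G)=\bigcap_{x\in G}(\ker\lambda)^x$ together with the normality of $N$. The maximal class part is bookkeeping with the lower central series, the one genuine input being $|G:N|\ge p^3$, which guarantees $c\ge 2$ so that the quotient keeps order at least $p^3$ and does not collapse to an abelian group. I do not expect to need the finer structure theory of maximal class groups, such as the fact that every normal subgroup contained in $G_2$ equals some $G_i$; that statement drops out of the class count in the guise of the identification $N=\gamma_m(G)$.
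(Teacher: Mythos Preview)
Your proof is correct and follows essentially the same route as the paper: the key inclusion $N\le\ker\chi=\ker(\lambda^G)\le\ker\lambda\le H$ is exactly the paper's argument for normal monomiality, and your derivation that $N=G_m$ merely spells out what the paper dismisses with ``obviously $G/N$ is of maximal class.'' No substantive differences.
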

\begin{proof}
  Obviously $G/N$ is of maximal class.
  Take any $\chi\in \mathrm{Irr}(G/N)$.
  Since $\chi\in\mathrm{Irr}(G)$ and $G$ is normally monomial, then there exist $M\unlhd G$ and a linear character $\lambda$ of $M$ such that $\chi=\lambda^G$.
  Note that
  \[
  N\le \mathrm{ker}\chi=\mathrm{ker}(\lambda^G)\le \mathrm{ker}\lambda\le M.
  \]
  Then $\lambda\in \mathrm{Irr}(M/N)$ and thus $G/N\in\mathcal{M}$. This completes the proof.
\end{proof}

 It is easy to see that, according to the proof of Lemma \ref{Inheritage},
if $G$ is a normally monomial group with $N\unlhd G$, then $G/N$ is also normally monomial.

\begin{lemm}\label{bonner}
Let $G\in\mathcal{M}$.
If $\theta\in\mathrm{Irr}(G_i)$ for $0\le i\le n$ and $\theta(1)>1$, then $\theta^G\in \mathrm{Irr}(G)$.
\end{lemm}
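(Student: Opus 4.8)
The plan is to reduce, via Clifford theory, to a statement about the inertia subgroup. Since $G_i\unlhd G$, the induced character $\theta^G$ is irreducible if and only if $I_G(\theta)=G_i$, where $I_G(\theta)$ is the stabilizer of $\theta$ in $G$; equivalently, the $G$-conjugacy orbit of $\theta$ has the full size $[G:G_i]=p^i$. The cases $i=0$ and $i=n$ are immediate or vacuous, and $\theta(1)>1$ forces $G_i$ to be non-abelian.

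First I would exploit normal monomiality. Let $\chi\in\mathrm{Irr}(G)$ lie over $\theta$. Because $G\in\mathcal{M}$, there exist $M\unlhd G$ and a linear $\mu\in\mathrm{Irr}(M)$ with $\chi=\mu^G$. The first key point is that $G_i\not\le M$: if $G_i\le M$, then since $M\unlhd G$ Mackey's formula gives $\chi_{G_i}=\sum_{g\in M\backslash G}(\mu^g)|_{G_i}$, a sum of \emph{linear} characters of $G_i$, which cannot contain the nonlinear constituent $\theta$ — a contradiction.

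Next I would pin down $M$ using the structure of normal subgroups of a $p$-group of maximal class: every normal subgroup either lies in $G_1$, in which case it is one of the chief terms $G_k$, or it contains $G_2$. For $i\ge 2$ this is decisive: if $M\not\le G_1$ then $M\supseteq G_2\supseteq G_i$, contradicting $G_i\not\le M$; hence $M\le G_1$, so $M=G_j$ for some chief term, and $G_i\not\le M$ forces $j>i$, i.e. $M=G_j\le G_i$. Now the computation is immediate. Since $G_j\le G_i$ we get $I_{G_i}(\mu)=G_i\cap I_G(\mu)=G_i\cap G_j=G_j$, so $\mu^{G_i}$ is irreducible and, after replacing $\mu$ by a suitable $G$-conjugate, equals $\theta$. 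Finally, for $g\in G$ one has $\theta^g=(\mu^{G_i})^g=(\mu^g)^{G_i}$, and two irreducibles induced from linear characters of $G_j\unlhd G_i$ coincide precisely when those linear characters are $G_i$-conjugate; thus $\theta^g=\theta$ forces $\mu^g=\mu^h$ for some $h\in G_i$, whence $gh^{-1}\in I_G(\mu)=G_j\le G_i$ and $g\in G_i$. Therefore $I_G(\theta)=G_i$ and $\theta^G$ is irreducible.

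The main obstacle is the case $i=1$. There $[G:G_1]=p$, so I only need that a nonlinear $\theta\in\mathrm{Irr}(G_1)$ is not $G$-invariant, but the argument above breaks down because $M$ may now be one of the $p$ maximal subgroups different from $G_1$, which contain $G_2$ yet are incomparable to $G_1$. Assuming $\theta$ is $G$-invariant, one first checks that if some extension of $\theta$ were induced from a subgroup inside $G_1$ the previous computation would already apply; so the stubborn configuration is $\theta(1)=p$ with every extension induced from a maximal subgroup $\ne G_1$. I expect to close this by a counting/vanishing argument relating the $p$ extensions of $\theta$ to the $p$ maximal subgroups $\ne G_1$ (comparing the values of $\theta^G=\sum_\beta\chi\beta$ off $G_1$), together with the reduction that factors out $Z(G)$ when $Z(G)\le\ker\theta$, allowing one to assume $\theta$ faithful on the center. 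This invariant-case analysis for the major centralizer is where the real work lies.
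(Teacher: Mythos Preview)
The paper does not prove this lemma at all: it simply cites \cite[Corollary~3.4]{Bonner}. So your proposal is not a comparison with the paper's argument but an attempt at an independent proof, and it should be judged as such.

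Your treatment of $i\ge 2$ is correct. The key inputs --- that $\mu^G$ irreducible forces $I_G(\mu)=G_j$, that the proper normal subgroups of a maximal class $p$-group of index at least $p^2$ are exactly the $G_j$, and the Clifford bookkeeping that pushes $I_G(\theta)$ down to $G_i$ --- are all sound. (A small cosmetic point: in your Mackey step the index set should be $G/M$, not $M\backslash G$, but the conclusion is unaffected.)

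The genuine gap is the case $i=1$, and you say so yourself. Your proposed ``counting/vanishing'' route is more complicated than necessary; the case closes cleanly once you use the \emph{defining} property of the major centralizer. Here is the missing step. Suppose $\theta\in\mathrm{Irr}(G_1)$ is nonlinear and $G$-invariant. As you observed, every extension $\chi$ of $\theta$ satisfies $\chi=\mu^G$ with $\mu$ linear on a maximal subgroup $M\neq G_1$, so $\theta(1)=p$ and $\theta$ vanishes on $G_1\smallsetminus G_2$; hence $\theta=\phi^{G_1}$ for a linear $\phi\in\mathrm{Irr}(G_2)$ with $I_{G_1}(\phi)=G_2$. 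Because $\theta$ is $G$-invariant, $(\phi^s)^{G_1}=\phi^{G_1}$ for $s\in G\smallsetminus G_1$, so $\phi^s=\phi^h$ for some $h\in G_1$ and $sh^{-1}\in I_G(\phi)\smallsetminus G_1$; thus $I_G(\phi)=M$ is a maximal subgroup different from $G_1$. Since $M,G_2\unlhd G$, the commutator $[M,G_2]$ is normal in $G$, lies in $G_3$, and contains $[s,s_2]\in G_3\smallsetminus G_4$; by the classification of normal subgroups this forces $[M,G_2]=G_3$, whence $G_3\le\ker\phi$. But $G_1=C_G(G_2/G_4)$ by definition, so $[G_1,G_2]\le G_4\le G_3\le\ker\phi$, which means $\phi$ is $G_1$-invariant --- contradicting $I_{G_1}(\phi)=G_2$. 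Therefore no nonlinear $\theta\in\mathrm{Irr}(G_1)$ is $G$-invariant, and $\theta^G$ is irreducible.

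In short: your plan for $i\ge 2$ works, but for $i=1$ the vanishing heuristic should be replaced by the observation that the major centralizer already centralizes $G_2/G_4$, which immediately kills the stubborn configuration.
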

\begin{proof}
  The result follows immediately from \cite[Corollary 3.4]{Bonner}.
\end{proof}





\begin{lemm}\label{lemma2}
Let $G\in\mathcal{M}$ with $\mathrm{cl}(G_1)=2$. Assume that $\mathrm{exp}({G_1}')=p$. If ${G_t}'=1$ and ${G_{t-1}}'\neq1$ for some positive integer $t$, then ${G_{t-1}}'\le [G_t,\cy{s_i}]$ for all $1\le i\le t-1$.
\end{lemm}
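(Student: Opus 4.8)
The plan is to turn the nonabelian commutator calculus into linear algebra by exploiting the two hypotheses. Since $\mathrm{cl}(G_1)=2$, the subgroup $G_1'$ is central in $G_1$, and since $\exp(G_1')=p$ it is elementary abelian; so I regard $G_1'$ as an $\mathbb{F}_p$-vector space on which $s$ acts by conjugation, and write $\tau$ for the linear map $x\mapsto[x,s]$ (conjugation by $s$ minus the identity). Under these assumptions identity $(*)$ collapses — every conjugation by an element of $G_1$ fixes each $C_{ij}$ — to $\tau(C_{ij})=C_{i+1,j}+C_{i,j+1}+C_{i+1,j+1}$ written additively. Crucially $\tau$ strictly raises the lower central weight, because $[G_w,s]\le G_{w+1}$, so $\tau$ is nilpotent on $G_1'$.

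Next I would reduce to an inclusion of subspaces. As $G_t'=1$, the group $G_t$ is abelian and $G_{t-1}=\cy{s_{t-1}}G_t$, whence $G_{t-1}'=[G_t,\cy{s_{t-1}}]$. Writing $V_j:=[G_t,\cy{s_j}]$, centrality of $G_1'$ in $G_1$ gives $V_j=\langle C_{k,j}: k\ge t\rangle\le G_1'$, and the claim becomes $V_{t-1}\subseteq V_i$ for $1\le i\le t-1$; it suffices to prove the single-step inclusions $V_a\subseteq V_{a-1}$ for $2\le a\le t-1$ and chain them. The hypothesis $G_t'=1$ also kills every $C_{k,j}$ with $k,j\ge t$, so on the top row the relation degenerates to the pure shift $\tau(C_{t-1,b})=C_{t-1,b+1}$ for $b\ge t$; in particular $V_{t-1}$ is $\tau$-cyclic, generated by $C_{t-1,t}$.

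To prove $V_a\subseteq V_{a-1}$ I would show $C_{a,b}\in V_{a-1}$ for every $b\ge t$ by downward induction on the weight $w=a+b$, starting near the top of the group where the relevant commutators vanish. Solving the relation for $C_{a,b}$ gives $C_{a,b}=\tau(C_{a-1,b})-C_{a-1,b+1}-C_{a,b+1}$, in which $C_{a-1,b+1}\in V_{a-1}$ outright, while $C_{a,b+1}$ has strictly larger weight and so lies in $V_{a-1}$ by the inductive hypothesis.

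The main obstacle is the remaining term $\tau(C_{a-1,b})$: it has the same weight $w$ as $C_{a,b}$, so the weight induction cannot absorb it, and substituting the relation back is circular. This is exactly the point at which the defining property of maximal class must enter: each factor $G_w/G_{w+1}$ is cyclic of order $p$, so the associated graded Lie algebra has one-dimensional homogeneous components, and the leading structure constants $\beta_{a,b}$ defined by $[\bar s_a,\bar s_b]=\beta_{a,b}\,\bar s_{a+b}$ satisfy the Pascal recurrence $\beta_{a,b}=\beta_{a+1,b}+\beta_{a,b+1}$, with $\beta_{a,b}=0$ for $a,b\ge t$. One-dimensionality forces the weight-$w$ leading part of $\tau(C_{a-1,b})$ to be a scalar multiple of the weight-$w$ generator $C_{a-1,b+1}$ of $V_{a-1}$, so that $\tau(C_{a-1,b})$ agrees with an element of $V_{a-1}$ modulo $G_{w+1}$, and the higher-weight discrepancy is reabsorbed by the inductive hypothesis. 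I expect the genuinely delicate point to be the bookkeeping that keeps these higher-weight error terms inside $V_{a-1}$, rather than merely inside the ambient $G_{w+1}\cap G_1'$; this is what makes it necessary to feed the maximal-class recurrence into the induction at every single weight, instead of arguing on the associated graded alone.
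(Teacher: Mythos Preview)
Your plan is purely commutator-theoretic and never invokes the hypothesis $G\in\mathcal{M}$, whereas the paper's argument is character-theoretic and uses normal monomiality in an essential way. The paper first observes that $\exp(G_1/Z(G_1))=p$, so $\langle G_t,s_i\rangle/[G_t,\langle s_i\rangle]$ is an abelian normal subgroup of $G_1/[G_t,\langle s_i\rangle]$ of index $p^{t-2}$, whence $b\bigl(G_1/[G_t,\langle s_i\rangle]\bigr)\le p^{t-2}$. Assuming $G_{t-1}'\nleq[G_t,\langle s_i\rangle]$, one passes to a quotient in which $Z(G)\cap[G_t,\langle s_i\rangle]=1$, writes $G_1'=Z(G)\times T$ with $[G_t,\langle s_i\rangle]\le T$, chooses a nonlinear $\theta\in\mathrm{Irr}(G_1/T)$, and uses Lemma~\ref{bonner} to get $\chi=\theta^G\in\mathrm{Irr}(G)$ of degree at most $p^{t-1}$. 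Normal monomiality then forces $\chi=\lambda^G$ for a linear $\lambda$ on some $G_r$ with $r\le t-1$, so $Z(G)\le G_{t-1}'\le G_r'\le\ker\chi\le\ker\theta$, and together with $T\le\ker\theta$ this makes $\theta$ linear, a contradiction.

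The obstruction you isolate at $\tau(C_{a-1,b})$ is not bookkeeping but the actual crux, and the maximal-class input you propose does not remove it. One-dimensionality of $G_w/G_{w+1}$ only tells you that $\tau(C_{a-1,b})$ is a scalar multiple of \emph{some} element of $G_w$ modulo $G_{w+1}$; it gives no reason for $C_{a-1,b+1}$ to sit in $G_w\setminus G_{w+1}$ rather than strictly deeper, so you cannot match leading parts against it. Even if you could, your inductive hypothesis only places the specific generators $C_{a,b'}$ with $a+b'>w$ inside $V_{a-1}$; it says nothing about arbitrary elements of $G_1'\cap G_{w+1}$, and $V_{a-1}$ is not a term of the lower central filtration, so ``higher weight'' does not mean ``already handled''. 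In fact your chain $V_a\subseteq V_{a-1}$ is strictly stronger than what the lemma asserts (which is only $V_{t-1}\subseteq V_i$), and if it could be proved by your recursion it would hold for every maximal-class $p$-group with $\mathrm{cl}(G_1)=2$ and $\exp(G_1')=p$, with no role for $G\in\mathcal{M}$. That hypothesis is exactly the missing ingredient; the commutator calculus alone does not close the argument.
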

\begin{proof}
If $i=t-1$, since $G_{t-1}=\cy{G_t,s_{t-1}}$, it follows from ${G_t}'=1$ that
\[
  {G_{t-1}}'=[G_t\cy{s_{t-1}},G_t\cy{s_{t-1}}]={G_t}'\cdot[G_t,\cy{s_{t-1}}]=[G_t,\cy{s_{t-1}}].
\]
Next assume that $i\le t-2$.
Since $\exp({G_1}')=p$ and cl$(G_1)=2$, then $\exp(G_1/Z(G_1))=p$.
So ${s_i}^p\in Z(G_1)\le G_t$ for $1\le i\le t-2$,
which means that $G_t$ is a maximal subgroup of $\cy{G_t,s_i}$.
Thus $\cy{G_t,s_i}/[G_t,\cy{s_i}]$ is an abelian normal subgroup of $G_1/[G_t,\cy{s_i}]$.
By \cite[Theorem 6.15]{Isaacs}, we have that
\[
b(G_1/[G_t,\cy{s_i}])\le|G_1:\cy{G_t,s_i}|=|G_1:G_t|/|\cy{G_t,s_i}:G_t|=p^{t-2}.
\]
Suppose that ${G_{t-1}}'\nleq [G_t,\cy{s_i}]$ for some $1\le i\le t-2$. Let $M\le {G_{t-1}}'\cap [G_t,\cy{s_i}]$ be a $G$-invariant subgroup of maximal order.
We now claim that
$Z(G/M)\cap [G_t,\cy{s_i}]/M=1$.
Otherwise, since $G/M$ is a $p$-group of maximal class, then $|Z(G/M)|=p$ and hence $Z(G/M)\le [G_t,\cy{s_i}]/M$.
Also, $Z(G/M)\le{G_{t-1}}'/M$. Thus $Z(G/M)\le {G_{t-1}}'/M\cap [G_t,\cy{s_i}]/M$.
Let $M_1/M:=Z(G/M)$. Then $M_1\leq {G_{t-1}}'\cap [G_t,\cy{s_i}]$,
which is contrary to the maximality of $M$, as claimed.
By Lemma \ref{Inheritage}, we may assume that $M=1$ by considering the factor group $G/M$.
So $Z(G)\cap [G_t,\cy{s_i}]=1$.

Write ${G_1}'=Z(G)\times T$. Then $[G_t,\cy{s_i}]\le T$ and hence
$b(G_1/T)\le b(G_1/[G_t,\cy{s_i}])\le p^{t-2}$.
Take a nonlinear $\theta\in\mathrm{Irr}(G_1/T)$.
Then $1<\theta(1)\le p^{t-2}$ and
 by Lemma \ref{bonner} we have that $\chi:=\theta^G\in\mathrm{Irr}(G)$.
 So
 $\chi(1)= p\theta(1)\le p^{t-1}$, and since $G$ is normally monomial, it follows that there exists a linear character $\lambda$ of $G_r$ such that $\chi=\lambda^G$, where $r\le t-1$.
Notice that ${G_{t-1}}'\neq1$.
We get $Z(G)\le {G_{t-1}}'\le {G_r}'\le (\mathrm{ker}\lambda)_G=\mathrm{ker}\chi\le \mathrm{ker}\theta$, and so ${G_1}'=Z(G)\times T\le \mathrm{ker}\theta$ and hence $\theta$ is linear, contradicting the choice of $\theta$.
So ${G_{t-1}}'\le [G_t,\cy{s_i}]$ for all $1\le i\le t-2$ and the result follows.
\end{proof}

In \cite[Theorem 9.6(c)]{Berkovich}, it is proved that, for a $p$-group $G$ of maximal class with order at least $p^{p+2}$, $G$ has no normal subgroup of order $p^p$ and exponent $p$; and if $N\unlhd G$ has order $p^{p-1}$, then $N$ is of exponent $p$.

\begin{prop}\label{bound}
Let $G\in\mathcal{M}$ with $\mathrm{cl}(G_1)=2$. Then $|{G_1}'|\le p^{p-2}$.
\end{prop}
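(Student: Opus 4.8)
The plan is to show that $G_1'$ can be generated by at most $p-2$ elements and that, together with the regular structure of $G_1$, this forces $|G_1'|\le p^{p-2}$.

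Since $\mathrm{cl}(G_1)=2$ we have $G_1'\le Z(G_1)$, so $G_1'$ is abelian and the commutator map $G_1\times G_1\to G_1'$ is bilinear (its values are central); as $G_1=\cy{s_1,\dots,s_{p-1}}$ this yields $G_1'=\cy{C_{ij}\mid 1\le i<j\le p-1}$. Moreover $G_1'$ is characteristic in $G_1$, hence normal in $G$, and $G_1'\le G'=G_2$; in fact $G_1/G_3$ has order $p^2$ and is abelian, so $G_1'\le G_3$. Because in a $p$-group of maximal class every normal subgroup contained in $G_2$ is a member of the lower central series, I may write $G_1'=G_m$ for some $m\ge 3$, so that $|G_1'|=p^{n-m}$ and the assertion is equivalent to $m\ge n-p+2$. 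To bound the rank $d(G_1')$ I would pass to the quotient $\overline G=G/(G_1')^p$, which lies in $\mathcal{M}$ by Lemma \ref{Inheritage} and still satisfies $\mathrm{cl}(\overline{G_1})=2$, thereby reducing the rank computation to the case $\exp(G_1')=p$.

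The engine of the argument is Lemma \ref{lemma2}. Let $t$ be largest with $G_{t-1}'\ne1$, so that $G_t$ is abelian; the lemma gives $G_{t-1}'\le[G_t,\cy{s_i}]$ for every $1\le i\le t-1$. Since $G_t=\cy{s_t,\dots,s_{t+p-2}}$ and $G_1'\le Z(G_1)$, bilinearity gives $[G_t,\cy{s_i}]=\cy{C_{ik}\mid t\le k\le t+p-2}$, a subgroup with at most $p-1$ generators. The crucial saving is that $s_{t+p-1},s_{t+p},\dots$ are words in $s_1,\dots,s_{p-1}$, so by the relations $(*)$ the boundary commutators reduce to those with second index at most $p-1$, and for a fixed $i$ the generating set shrinks to the $p-2$ elements $C_{ij}$ with $j\ne i$. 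Propagating this through the bilinear relations $(*)$ to all of $G_1'$, not merely $G_{t-1}'$, I expect to obtain $d(G_1')\le p-2$; combined with $\exp(G_1')=p$ after the reduction above, this gives $|G_1'|\le p^{p-2}$.

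The general exponent is handled through the regular structure together with the cited results of Berkovich. Using $G_1{}^p=G_p$ and $C_{ij}^p=[s_i,s_j]^p=[s_i,s_j{}^p]$ with $s_j{}^p\in G_{j+p-1}$, one controls how $p$-th powers of $G_1'$ sit in the lower central series; and if $|G_1'|=p^{n-m}$ were at least $p^{p-1}$, then $G_1'=G_m$ would be a normal subgroup whose extremal orders $p^{p-1}$ and $p^p$ are governed by Berkovich (a normal subgroup of order $p^{p-1}$ has exponent $p$, while none of order $p^p$ has exponent $p$), which against $d(G_1')\le p-2$ is contradictory. The main obstacle is exactly this coordination: promoting the containment $G_{t-1}'\le[G_t,\cy{s_i}]$ from the top nonabelian level to a bound on $G_1'$ when several of the $G_j$ are nonabelian, and simultaneously ruling out the extra depth that $p$-th powers could create for large $n$, which is where the Berkovich constraints on normal subgroups of order $p^{p-1}$ and $p^p$ are indispensable.
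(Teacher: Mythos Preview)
Your proposal tracks the paper's strategy closely: reduce to $\exp(G_1')=p$ via the quotient by $(G_1')^p$, invoke Lemma~\ref{lemma2} at the top abelian level $t$, and aim to conclude that $G_1'$ is generated by the $p-2$ commutators $C_{1r}$ with $2\le r\le p-1$. Your treatment of the case $\exp(G_1')>p$ is also essentially the paper's: once the exponent-$p$ case gives $|G_1'/(G_1')^p|\le p^{p-2}$, this contradicts $|G_1'/(G_1')^p|=|G_m/G_{m+p-1}|=p^{p-1}$ (using $G_i^p=G_{i+p-1}$).

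The genuine gap is precisely the step you yourself label ``the main obstacle'': lifting the containment $G_{t-1}'\le[G_t,\cy{s_i}]$ to a statement about all of $G_1'$. The relations $(*)$ merely record the $s$-action on the $C_{ij}$; they do not by themselves force $G_1'\subseteq[G_1,\cy{s_1}]$. The normal monomiality enters only through Lemma~\ref{lemma2}, and that lemma applies solely at the threshold index $t$ where $G_t'=1$, so one application bounds only $G_{t-1}'$. The paper closes the gap by induction on $|G|$: set $\overline G=G/G_{t-1}'$, which lies in $\mathcal{M}$ by Lemma~\ref{Inheritage} and still has $\mathrm{cl}(\overline{G_1})\le 2$; in $\overline G$ the threshold index drops strictly, so the inductive hypothesis (not the relations $(*)$) yields $\overline{G_1}{}'=[\overline{G_1},\cy{\overline{s_1}}]$. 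Combining this with $G_{t-1}'\le[G_t,\cy{s_1}]\le[G_1,\cy{s_1}]$ gives $G_1'=[G_1,\cy{s_1}]$, and then bilinearity together with $G_1=\cy{s_1,\dots,s_{p-1}}$ yields $G_1'=\cy{C_{1r}\mid 2\le r\le p-1}$, hence $|G_1'|\le p^{p-2}$. Your phrase ``propagating this through the bilinear relations $(*)$'' does not supply this mechanism; what is actually needed is iterated use of Lemma~\ref{lemma2} in successive quotients, i.e.\ the induction.
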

\begin{proof}
Suppose that exp$({G_1}')=p$. Let $t$ be the integer such that ${G_t}'=1$ and ${G_{t-1}}'\neq1$. By Lemma \ref{lemma2}, we have
\[
{G_{t-1}}'\le [G_t,\cy{s_1}].
\]
We now claim that ${G_1}'= [G_1,\cy{s_1}]$.
If $t=2$, it is easy to see that ${G_1}'\le [G_2,\cy{s_1}]\le [G_1,\cy{s_1}]$ and hence ${G_1}'= [G_1,\cy{s_1}]$.
If $t>2$, and let $\overline{G}=G/{G_{t-1}}'$.
It follows from Lemma \ref{Inheritage} that $\overline{G}\in \mathcal{M}$. Let $j$ be the smallest integer such that ${G_{t-j}}'\neq {G_{t-1}}'$.
Obviously $j\ge2$; and ${\overline{G}_{t-j+1}}'=1$ and ${\overline{G}_{t-j}}'\neq1$ by the minimality of $j$.
Using again Lemma \ref{lemma2},
we get
\[
{\overline{G}_{t-j_1}}'\le [\overline{G}_{t-j_1+1},\cy{\overline{s}_1}].
\]
By induction on $|G|$, we have $(+)$ ${\overline{G}_1}'=[\overline{G}_1,\cy{\overline{s}_1}]$.
Note that ${G_{t-1}}'\le[G_t,\cy{s_1}]\le[G_1,\cy{s_1}]$, and then $(+)$ implies that ${G_1}'= [G_1,\cy{s_1}]$, as claimed.

Since exp$({G_1}')=p$ and $G_1=\cy{s_1,s_2,\cdots,s_{p-1}}$, from the claim, it follows that
\[
{G_1}'=[G_1,\cy{s_1}]=\cy{[s_r,s_1]~|~2\le r\le p-1}.
\]
Therefore $|{G_1}'|\le p^{p-2}$.

Suppose that $\mathrm{exp}({G_1}')>p$. Then $|{G_1}'/({G_1}')^p|=p^{p-1}$ and by \cite[Theorem 9.6(c)]{Berkovich} we have $\mathrm{exp}({G_1}'/({G_1}')^p)=p$.
By induction on $|G|$ since $G/({G_1}')^p\in \mathcal{M}$, we have $|{G_1}'/({G_1}')^p|\le p^{p-2}$, which is contrary to $|{G_1}'/({G_1}')^p|=p^{p-1}$ and so the proof is complete.
\end{proof}

\begin{lemm}\label{00001}
Let $G\in\mathcal{M}$ with $|G|\ge p^{p+2}$. Then the following results hold.

{\rm{(1)}} If $\mathrm{cl}(G_1)=2$, then $b(G)\le p^{\frac{p+1}{2}}$;

{\rm{(2)}} If $\mathrm{cl}(G_1)=3$, then $b(G)\le p^{p}$.
\end{lemm}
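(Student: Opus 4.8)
The plan is to bound $b(G)$ through the major centralizer $G_1$ and to reduce everything to a lower estimate for $Z(G_1)$. Since $G_1\unlhd G$ with $|G:G_1|=p$, Clifford's theorem forces every $\chi\in\ir{G}$ to satisfy $\chi(1)\le p\,\theta(1)$ for some irreducible constituent $\theta$ of the restriction $\chi_{G_1}$, so that $b(G)\le p\,b(G_1)$. On the other hand, for each $\theta\in\ir{G_1}$ the standard inequality $\theta(1)^2\le|G_1:Z(G_1)|$ holds (see \cite{Isaacs}). Hence the whole statement reduces to producing a large central subgroup of $G_1$: in case (1) it suffices to prove $G_p\le Z(G_1)$, which yields $|G_1:Z(G_1)|\le|G_1:G_p|=p^{p-1}$, so that $b(G_1)\le p^{(p-1)/2}$ and $b(G)\le p^{\frac{p+1}{2}}$; in case (2) it suffices to prove $G_{2p-1}\le Z(G_1)$, which yields $|G_1:Z(G_1)|\le p^{2p-2}$, so that $b(G_1)\le p^{p-1}$ and $b(G)\le p^{p}$.

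For case (1) I would exploit the uniserial structure of $G$. Because $G_1'\unlhd G$ and $G_1'\le G_2$, and because the only normal subgroups of $G$ lying inside $G_1$ are the terms $G_i$ of the lower central series, we get $G_1'=G_m$ for some $m$. Proposition \ref{bound} gives $|G_1'|\le p^{p-2}$, that is $m\ge n-p+2$, so $G_1'$ is a very deep term of the series; and since $\mathrm{cl}(G_1)=2$ we have $G_1'\le Z(G_1)$, i.e.\ $[G_m,G_1]=1$. The crux is to promote this single relation to $[G_i,G_1]=1$ for all $i\ge p$. This is precisely the assertion that the depth of $G_1'$ forces $G$ to have a large degree of commutativity, and I would derive it from the commutator collection formula $(*)$ together with Lemma \ref{lemma2} and the standard maximal-class estimates $[G_i,G_j]\le G_{i+j+c}$ of \cite{Leedham}: the point is that $m\ge n-p+2$ pins the defect at $c\ge n-p-1$, whence $[G_p,G_1]\le G_{p+1+c}=1$. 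Thus $G_p\le Z(G_1)$ and case (1) follows from the first paragraph.

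Case (2) runs along the same lines, one commutator deeper. Here $\mathrm{cl}(G_1)=3$ gives $\gamma_3(G_1)\le Z(G_1)$, and uniseriality again yields $\gamma_3(G_1)=G_{m'}$ with $[G_{m'},G_1]=1$. The additional ingredient I would need is the analogue of Proposition \ref{bound} for $\mathrm{cl}(G_1)=3$, bounding $|\gamma_3(G_1)|$ (and $|\gamma_2(G_1)|$) so that $m'$ is large; once this is available, the same collection-formula propagation upgrades $[G_{m'},G_1]=1$ to $[G_{2p-1},G_1]=1$, giving $G_{2p-1}\le Z(G_1)$ and hence $b(G)\le p^{p}$.

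The main obstacle is the propagation step in the second paragraph, namely turning the depth of $G_1'$ into the containment $G_p\le Z(G_1)$. The difficulty is one of careful bookkeeping with the collection formula $(*)$: one must control the weights of the iterated commutators $C_{ij}$ and verify that the depth of $G_1'$ forces every $C_{ij}$ with $i\ge p$ to vanish, which is exactly where the hypotheses $|G|\ge p^{p+2}$ and $\mathrm{cl}(G_1)=2$ are consumed. For case (2) the secondary obstacle is establishing the class-$3$ analogue of Proposition \ref{bound}; I expect this to require the same combination of Lemma \ref{bonner}, normal monomiality, and the result of \cite{Berkovich} on normal subgroups of order $p^{p-1}$ that underlies the proof of Proposition \ref{bound}.
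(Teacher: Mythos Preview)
Your outline---bound $b(G_1)$ through $|G_1:Z(G_1)|$ and then use $b(G)\le p\,b(G_1)$---is correct and matches the paper, and your target $G_p\le Z(G_1)$ in case~(1) is exactly the intermediate conclusion the paper reaches (stated there as $|G_1:Z(G_1)|\le p^{p-1}$). The gap is in \emph{how} you propose to get there. The claim that $G_1'=G_m$ with $m\ge n-p+2$ ``pins the defect at $c\ge n-p-1$'' is unjustified: the degree of commutativity requires $[G_i,G_j]\le G_{i+j+c}$ for \emph{all} $i,j\ge1$, and knowing only the case $i=j=1$ does not give the rest. Nothing in the collection formula~$(*)$ or in Lemma~\ref{lemma2} supplies that propagation; all you can extract from $G_i,G_j\le G_1$ is the trivial containment $[G_i,G_j]\le G_1'=G_m$, which says nothing about $[G_p,G_1]$ when $p<m$. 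You flag this as the ``main obstacle'', and it is a real one that your sketch does not overcome.

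The tool you are missing is the regularity of $G_1$ recorded in Section~2: $[A,B^p]=[A,B]^p$ for $A,B\unlhd G_1$. Since $|G|\ge p^{p+2}$ gives $G_1^{\,p}=G_p$, one has immediately
\[
[G_p,G_1]=[(G_1)^p,G_1]=(G_1')^p,
\]
and Proposition~\ref{bound} together with \cite[Theorem~9.6(c)]{Berkovich} forces $\exp(G_1')=p$, so $(G_1')^p=1$ and $G_p\le Z(G_1)$. The paper packages this slightly differently (it bounds $|G_1/Z(G_1)|$ by applying Berkovich's exponent-$p$ bound inside $G/Z(G_1)$), but the engine is the same exponent computation, not commutativity-degree bookkeeping. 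For case~(2) the paper does not aim for $G_{2p-1}\le Z(G_1)$ at all; it shows instead that $G_p$ is \emph{abelian}, which already gives $b(G)\le|G:G_p|=p^p$. The argument is again pure regularity and exponent: apply Proposition~\ref{bound} to $G/\gamma_3(G_1)$ to get $(G_1')^p\le\gamma_3(G_1)\le Z(G_1)$, deduce $\gamma_3(G_1)^p=[(G_1')^p,G_1]=1$ and hence $\exp(G_1')\le p^2$, and conclude $[G_p,G_p]=(G_1')^{p^2}=1$. No new ``class-$3$ analogue'' of Proposition~\ref{bound} is needed.
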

\begin{proof}
(1) If cl$(G_1)=2$, by Proposition \ref{bound},  we have $|{G_1}'|\leq p^{p-2}$ and, from \cite[Theorem 9.6(c)]{Berkovich}, it follows that  $\mathrm{exp}({G_1}')=p$ and thus $\mathrm{exp}(G_1/Z(G_1))=p$, and since $G_1/Z(G_1)\unlhd G/Z(G_1)$ and $G/Z(G_1)\in \mathcal{M}$ we further have that $|G_1:Z(G_1)|\le p^{p-1}$ by \cite[Theorem 9.6(c)]{Berkovich} again.
This yields $\theta(1)^2\le p^{p-1}$ for any $\theta\in\mathrm{Irr}(G_1)$ and so $b(G_1)\le p^{\frac{p-1}{2}}$.
Since $|G:G_1|=p$, it follows that $b(G)\le p^{\frac{p+1}{2}}$.

(2) If cl$(G_1)=3$, clearly cl$(G_1/\gamma_3(G_1))=2$; and since $G/\gamma_3(G_1)\in \mathcal{M}$, from Proposition \ref{bound}, it follows  that $|{G_1}'/\gamma_3(G_1)|\le p^{p-2}$.
So $({G_1}')^p\le \gamma_3(G_1)\le Z(G_1)$ and hence
\[
{\gamma_3(G_1)}^p=[{G_1}',G_1]^p=[({G_1}')^p,G_1]\le[Z(G_1),G_1]=1,
\]
Then exp$(\gamma_3(G_1))=p$ and so $|\gamma_3(G_1)|\le p^{p-1}$.
Therefore, $|{G_1}'|\le p^{2p-3}$ and hence  ${G_1}'\le \Omega_2(G_1)$.
It follows that exp$({G_1}')\le p^2$ and so
\[
[G_p,G_p]=[(G_1)^p,(G_1)^p]=[G_1,G_1]^{p^2}=({G_1}')^{p^2}=1.
\]
Thus $G_p$ is abelian and $b(G)\le |G:G_p|= p^p$.
\end{proof}

~\par

We also recall a result.

\begin{lemm}\label{Leedham lemma2}
  Let $G$ be a $p$-group of maximal class of order $p^n$ for $p\ge 5$. Then

  {\rm(1)} If $|G|\ge p^{6p-23}$, then $G_1$ has class at most 3.

  {\rm(2)} $|\gamma_3(G_1)|\le p^{2p-8}$.

   {\rm(3)} If $n>p+1$, then $G$ has positive degree of commutativity.
\end{lemm}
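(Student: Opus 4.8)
These are classical structural facts about $p$-groups of maximal class, and my plan is to organize all three parts around the degree of commutativity, following Blackburn and \cite[Chapter 3]{Leedham}. Recall that the degree of commutativity of $G$ is the largest integer $l$ with $[G_i,G_j]\le G_{i+j+l}$ for all $i,j\ge 1$, where $G_m=1$ for $m\ge n$. The single estimate I would use to drive parts (1) and (2) is the lower bound
\[
2l\ge n-2p+5,
\]
which holds once $G$ has positive degree of commutativity. So I would prove (3) first, and then extract (1) and (2) from this inequality via a routine commutator descent.

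The descent is elementary: by definition of $l$ we have $\gamma_2(G_1)=[G_1,G_1]\le G_{2+l}$, and if $\gamma_k(G_1)\le G_{k+(k-1)l}$ then
\[
\gamma_{k+1}(G_1)=[\gamma_k(G_1),G_1]\le G_{\,k+(k-1)l+1+l}=G_{(k+1)+kl},
\]
so by induction $\gamma_k(G_1)\le G_{k+(k-1)l}$ for all $k\ge 2$. For (2) this yields $\gamma_3(G_1)\le G_{3+2l}$ and hence $|\gamma_3(G_1)|\le p^{\,n-3-2l}\le p^{\,2p-8}$ by the displayed bound (with the convention that $G_{3+2l}=1$ when $3+2l\ge n$); the finitely many small cases $n\le p+1$, where positive degree of commutativity may fail, I would check directly, since there $|G_1|\le p^{p}$ already forces $\gamma_3(G_1)$ to be small. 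For (1) the same descent gives $\gamma_4(G_1)\le G_{4+3l}$, and using $l\ge (n-2p+5)/2$ one finds $4+3l\ge (3n-6p+23)/2\ge n$ precisely when $n\ge 6p-23$; in that range $G_{4+3l}=1$, so $\gamma_4(G_1)=1$ and $G_1$ has class at most $3$.

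It remains to secure (3) together with the quantitative bound on $l$, and this is where I expect the real difficulty to lie. Positive degree of commutativity for $n>p+1$ is the foundational maximal-class theorem: it rests on the commutator-collection identity $(*)$ and on the fact, noted earlier, that for $n\ge p+2$ the group $G$ has no section isomorphic to $C_p\wr C_p$ --- precisely the configuration that realizes degree zero at order $p^{p+1}$. Upgrading $l\ge 1$ to $2l\ge n-2p+5$ then proceeds through the standard inductive analysis of the two-step centralizer. The hard part is thus establishing positive degree of commutativity and its linear-in-$n$ strengthening; since this is exactly the content of the cited structure theory, my plan is to invoke \cite[Chapter 3]{Leedham} for it rather than to reprove it here.
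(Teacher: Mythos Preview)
Your proposal is sound; note, however, that the paper does not actually give a proof but simply cites \cite[Theorems 3.3.5, 3.3.12 and 3.4.13]{Leedham}. What you have written is an elaboration of the structure behind that citation: the elementary descent $\gamma_k(G_1)\le G_{k+(k-1)l}$ combined with the quantitative bound $2l\ge n-2p+5$ on the degree of commutativity recovers precisely the constants $2p-8$ in (2) and $6p-23$ in (1), and you correctly isolate (3) and its strengthening as the substantive input, for which you too ultimately defer to the same source. So your route and the paper's are the same in content; yours just unpacks one layer.

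One small point: your handling of the range $n\le p+1$ in part (2) is only a gesture. For $p=5$ and $n=6$ the naive estimate $|\gamma_3(G_1)|\le |G_3|=p^{n-3}=5^3$ does not yet give $p^{2p-8}=5^2$, so ``$|G_1|\le p^p$ forces $\gamma_3(G_1)$ to be small'' is not a proof. These finitely many orders would need an honest check (or a slightly sharper argument). That said, the paper only applies part (2) when $n\ge 4p-10$ or $n\in\{8,9\}$ with $p=5$, all of which satisfy $n>p+1$, so this lacuna is harmless for the paper's purposes.
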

\begin{proof}
  See \cite[Theorems 3.3.5, 3.3.12 and 3.4.13]{Leedham}.
  \end{proof}


The bound in Lemma \ref{Leedham lemma2}(1) will be improved in the case that $G\in\mathcal{M}$.

\begin{lemm}\label{nilpotent class}
Let $G\in\mathcal{M}$. If $|G|\ge p^{4p-10}$, then $\mathrm{cl}(G_1)\le 2$.
\end{lemm}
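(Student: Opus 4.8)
The plan is to assume $\mathrm{cl}(G_1)\ge 3$ and derive a contradiction, running an induction on $|G|$ (a minimal counterexample) and exploiting the normally monomial hypothesis through the irreducible characters of the major centralizer $G_1$ itself. Since $|Z(G)|=p$ and $G/Z(G)\in\mathcal M$ by Lemma~\ref{Inheritage}, whenever $|G/Z(G)|\ge p^{4p-10}$ the inductive hypothesis gives $\mathrm{cl}(G_1/Z(G))\le 2$, that is $\gamma_3(G_1)\le Z(G)$; as $\gamma_3(G_1)\ne 1$ and $|Z(G)|=p$, this forces $\gamma_3(G_1)=Z(G)$. Hence $G_1$ has class exactly $3$, $G_1'$ is abelian (indeed $[G_1',G_1']\le\gamma_4(G_1)=1$), and applying Proposition~\ref{bound} to $G/Z(G)$ yields $|G_1'|\le p^{p-1}$. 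Lemma~\ref{Leedham lemma2}(3) supplies a positive degree of commutativity, which I use to locate the commutator subgroups $G_m'$ in the lower central series. The leftover base case $|G|=p^{4p-10}$ must be treated separately, and it is there that the exponent $4p-10$ is sharp against Lemma~\ref{Leedham lemma2}.

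The engine of the argument is a descent from $G$ to $G_1$. For any nonlinear $\theta\in\mathrm{Irr}(G_1)$, Lemma~\ref{bonner} gives $\theta^G\in\mathrm{Irr}(G)$, and normal monomiality then writes $\theta^G=\lambda^G$ with $\lambda$ a linear character of a normal subgroup of index $\theta^G(1)=p^m\ge p^2$. Because in a $p$-group of maximal class the unique normal subgroup of index $p^m$ with $m\ge 2$ is $G_m$, we have $\lambda\in\mathrm{Irr}(G_m)$ with $G_m\le G_1$. Restricting both sides to $G_1$ and using that $G_1\unlhd G$ has index $p$ (so the Mackey double cosets $G_1\backslash G/G_m$ are just the right cosets of $G_1$), I would compare $(\theta^G)|_{G_1}=\sum_i\theta^{s^i}$ with $(\lambda^G)|_{G_1}=\sum_i(\lambda^{s^i})^{G_1}$ and conclude $\theta=\mu^{G_1}$ for some linear $\mu$ of $G_m$. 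Thus in the present situation every nonlinear irreducible character of $G_1$ is monomially induced from a lower central subgroup $G_m$.

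The decisive structural fact is that, $G$ being of maximal class, $Z(G)=\gamma_3(G_1)$ is the unique minimal normal subgroup of $G$ and therefore lies in every nontrivial normal subgroup. Hence whenever $G_m$ is nonabelian we get $\gamma_3(G_1)=Z(G)\le G_m'\le\ker\mu$, and since $\gamma_3(G_1)\unlhd G$ this forces $\gamma_3(G_1)\le\ker(\mu^{G_1})=\ker\theta$. Now I would pick $\theta\in\mathrm{Irr}(G_1)$ with $\gamma_3(G_1)\nleq\ker\theta$ (possible because $\bigcap_\theta\ker\theta=1$ while $\gamma_3(G_1)\ne 1$); such $\theta$ is automatically nonlinear, so by the descent $\theta=\mu^{G_1}$ with $\mu$ linear on some $G_m$. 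If that $G_m$ is nonabelian we reach $\gamma_3(G_1)\le\ker\theta$, a contradiction. So it remains to guarantee that the inducing subgroup is nonabelian: I would bound $\theta(1)=[G_1:G_m]$ by $[G_1:G_p]=p^{p-1}$ via Isaacs' inequality with the abelian normal subgroup $G_p={G_1}^p$ (abelian by the computation in Lemma~\ref{00001}(2)), so $m\le p$, and then use the positive degree of commutativity together with $|G|\ge p^{4p-10}$ to force $G_m$ nonabelian for every admissible $m$ below the abelian boundary.

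The hard part will be the extremal degree $\theta(1)=p^{p-1}$, where the inducing subgroup is the \emph{abelian} term $G_p={G_1}^p$ (more generally, the abelian $G_m$ of smallest index), so the containment $\gamma_3(G_1)\le G_m'$ collapses. To close this case I would show that $G_p$ is not self-centralizing in $G_1$, whence no linear character of $G_p$ induces irreducibly to $G_1$ and the degree $p^{p-1}$ is never realized by such an induced character. Using regularity, $[G_1,G_p]=[G_1,{G_1}^p]=(G_1')^p$, and from $[G_1',G_1]=Z(G)$ of order $p$ one gets $[(G_1')^p,G_1]=1$, so $(G_1')^p\le Z(G_1)$ has order at most $p^{p-2}$. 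Since the inertia subgroup $I_{G_1}(\mu)$ of any linear $\mu\in\mathrm{Irr}(G_p)$ contains $C_{G_1}(G_p)$, it suffices to prove $C_{G_1}(G_p)>G_p$; as the commutator map $G_1/G_p\to\mathrm{Hom}(G_p,(G_1')^p)$ takes values in a target governed by the small group $(G_1')^p$, its kernel should be nontrivial once $|G|$ is large. Making this length-and-rank estimate uniform over all admissible $m$, balanced against the degree-of-commutativity and $|\gamma_3(G_1)|$ bounds of Lemma~\ref{Leedham lemma2}, is precisely what pins the threshold at $4p-10$, and is the crux of the proof.
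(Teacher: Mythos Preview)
Your inductive step already contains everything needed to finish, and the character-theoretic ``engine'' is a detour that never gets used. Once you have $\gamma_3(G_1)=Z(G)$ and $|G_1'|\le p^{p-1}$ (from Proposition~\ref{bound} applied to $G/Z(G)$), Berkovich's Theorem~9.6(c) gives $\exp(G_1')=p$, hence $(G_1')^p=1$; regularity then yields $[G_1,G_p]=[G_1,G_1^p]=(G_1')^p=1$, so $G_p\le Z(G_1)$. Since $|G_1'|\le p^{p-1}$ and $|G|\ge p^{4p-9}$ force $G_1'\le G_p$, you get $G_1'\le Z(G_1)$ and $\mathrm{cl}(G_1)\le 2$ immediately. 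In particular, your ``hard case'' is vacuous: $(G_1')^p=1$ makes $C_{G_1}(G_p)=G_1$, so no character of $G_p$ induces irreducibly to $G_1$ for the trivial reason that $G_p$ is central. The descent $\theta=\mu^{G_1}$, the inertia estimates, and the commutator-map argument are all unnecessary for $|G|>p^{4p-10}$.

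The genuine gap is the base case $|G|=p^{4p-10}$, which you acknowledge but do not treat; without it the induction does not start. This is exactly where the paper's proof differs in spirit: it is entirely direct, with no induction and no characters. It first bounds $|G_1'|\le p^{3p-10}$ by combining $|\gamma_3(G_1)|\le p^{2p-8}$ from Lemma~\ref{Leedham lemma2}(2) with $|G_1'/\gamma_3(G_1)|\le p^{p-2}$ from Proposition~\ref{bound}, and this is what makes the hypothesis $|G|\ge p^{4p-10}$ give $G_1'\le G_p$. The remaining work is to upgrade $|G_1'|\le p^{3p-10}$ to $|G_1'|\le p^{p-1}$: the paper assumes $|G_1'|\ge p^p$, takes $N\unlhd G$ inside $G_1'$ with $|G_1'/N|=p^{p-1}$, and shows that $G_1/N$ would simultaneously have class $\le 2$ (because $G_1'/N\le G_p/N\le Z(G_1/N)$) and class $\ge 3$ (because $|G_1'/\gamma_3(G_1)|\le p^{p-2}<p^{p-1}$ forces $N<\gamma_3(G_1)$), a contradiction. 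From $|G_1'|\le p^{p-1}$ the conclusion follows by the same two lines you already have. So the threshold $4p-10$ is pinned by the structural bound $|G_1'|\le p^{3p-10}$, not by any character or rank estimate, and the whole argument is shorter than your sketch of the inductive step alone.
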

\begin{proof}
Obviously cl$(G_1/\gamma_3(G_1))=2$.
By Lemma \ref{Inheritage} and Proposition \ref{bound}, we have that $|{G_1}'/\gamma_3(G_1)|\le p^{p-2}$.
From Lemma \ref{Leedham lemma2}(2), it follows that $|{G_1}'|\le p^{3p-10}$.

If $|G|\ge p^{4p-10}$, 
then $|G:{G_1}'|\ge p^p=|G:G_p|$ and so ${G_1}'\le G_p$.
Suppose that $|{G_1}'|\ge p^p$. Let $N\unlhd G$ and $N\le {G_1}'$ be such that $|{G_1}'/N|=p^{p-1}$.
Then exp$({G_1}'/N)=p$ and hence $[(G_1)^p,G_1]=[G_1,G_1]^p=({G_1}')^p\le N$.
Thus $(G_1)^p/N\le Z(G_1/N)$. Moreover,
\[
{G_1}'/N\le G_p/N=(G_1)^p/N\le Z(G_1/N).
\]
So cl$(G_1/N)\le 2$.
However, since $|{G_1}'/\gamma_3(G_1)|\le p^{p-2}<p^{p-1}=|{G_1}'/N|$, then $\gamma_3(G_1)>N$ and so cl$(G_1/N)\ge3$, a contradiction.
Therefore $|{G_1}'|\le p^{p-1}$ and so exp$({G_1}')=p$.
This yields $[G_1,G_p]=[G_1,{G_1}^p]=({G_1}')^p=1$.
Hence $G_p\le Z(G_1)$. We conclude from ${G_1}'\le G_p$ that cl$(G_1)\le 2$ and the result follows.
\end{proof}

~\par
First, we are ready to prove Theorem B.\\

$\bf Proof~of~Theorem~B$ 
If $G\in\mathcal{M}$ and $|G|\ge p^{4p-10}$,
from Lemma \ref{nilpotent class}, it follows that cl$(G_1)\le 2$, and by Lemma \ref{00001} we get
 $b(G)\le p^{\frac{p+1}{2}}$ and the proof is complete.
~~~~~~~~~~~~~~~~~~~~~~$\Box$

~\par
Next, we are ready to prove Theorem A. Before proving it, we state a lemma which will be used to describe the structure of the 5-groups of maximal class and, to make it readable, we also present its proof.\\


\begin{lemm}\label{p group book}
  Let $G$ be a $p$-group of maximal class with $|G|=p^n\ge p^{p+2}$. Suppose that $s\in G-G_1$. Then $[s,g]\in G_{i+1}-G_{i+2}$ for $1\le i\le n-2$ and $g\in G_i-G_{i+1}$.
  \end{lemm}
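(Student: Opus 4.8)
The plan is to split the statement into its two halves: the containment $[s,g]\in G_{i+1}$, which is routine, and the non-containment $[s,g]\notin G_{i+2}$, which is the actual content. For the containment, observe that when $i\ge 2$ we have $g\in G_i=\gamma_i(G)$, so $[s,g]\in[G,\gamma_i(G)]=\gamma_{i+1}(G)=G_{i+1}$; and when $i=1$ we have $[s,g]\in[G,G_1]\le[G,G]=G_2=G_{i+1}$. Thus in every case $[s,g]\in G_{i+1}$, and it remains only to prove $[s,g]\notin G_{i+2}$. Since $|G_{i+1}/G_{i+2}|=p$, this is equivalent to showing that the image of $[s,g]$ generates $G_{i+1}/G_{i+2}$.

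I would anchor the argument at $i=1$ using the nonabelian factor group $\overline{G}=G/G_3$ of order $p^3$. Here $\overline{G_2}=Z(\overline{G})$ and $\overline{G}/Z(\overline{G})=G/G_2$ is elementary abelian of order $p^2$, on which commutation induces a nondegenerate alternating form. The image of $g\in G_1-G_2$ lies in the line $G_1/G_2$ and is nonzero, while the image of $s$ is nonzero and does \emph{not} lie in $G_1/G_2$ (because $s\notin G_1$); hence the two images are linearly independent, and nondegeneracy forces $[\overline{s},\overline{g}]\neq 1$, that is, $[s,g]\notin G_3$.

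For $2\le i\le n-2$ I would argue directly that $G_{i+1}/G_{i+2}=\cy{\overline{[s,g]}}$. Starting from $G_{i+1}=\gamma_{i+1}(G)=[G,G_i]$ together with $G_i=\cy{g}G_{i+1}$, the quotient $G_{i+1}/G_{i+2}$ is generated by the images of the commutators $[x,g]$ with $x$ ranging over a generating set of $G$. Since $G=\cy{s,s_1}$ with $s_1\in G_1$, it suffices to consider $x=s$ and $x=s_1$. The second contribution is trivial modulo $G_{i+2}$ because $[s_1,g]\in[G_1,G_i]\le G_{i+2}$, which is exactly where positive degree of commutativity enters: by Lemma \ref{Leedham lemma2}(3), applicable since $n\ge p+2>p+1$, we have $[G_1,G_i]\le G_{1+i+1}=G_{i+2}$. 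Hence $G_{i+1}/G_{i+2}$ is generated by $\overline{[s,g]}$ alone, which (being nontrivial) gives $[s,g]\notin G_{i+2}$.

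The main obstacle I anticipate is the bookkeeping forced by the major centralizer $G_1$, which, unlike $G_i$ for $i\ge 2$, is not a term of the lower central series; consequently neither $[G,G_1]=G_2$ nor $[G_1,G_i]\le G_{i+2}$ is purely formal. The first of these is why the case $i=1$ must be separated off and handled through the order-$p^3$ quotient rather than by the generic generation argument, and the second is precisely the point at which positive degree of commutativity must be invoked, with care that both indices in $[G_1,G_i]$ are at least $1$ so that the bound applies.
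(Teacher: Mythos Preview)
Your proof is correct, but it is organized differently from the paper's. The paper gives a single uniform argument for all $i$: assuming $[s,g]\in G_{i+2}$, it observes that in $\overline{G}=G/G_{i+2}$ the element $\overline{s}$ centralizes both $\overline{g}$ and $\overline{G_{i+1}}$, hence all of $\overline{G_i}=\cy{\overline{g}}\,\overline{G_{i+1}}$, so that $s\in C_G(G_i/G_{i+2})$; it then quotes the two-step centralizer identity $C_G(G_i/G_{i+2})=G_1$ from \cite{Leedham} to obtain the contradiction $s\in G_1$. You instead split off $i=1$ via the order-$p^3$ quotient and handle $i\ge 2$ constructively, showing that $G_{i+1}/G_{i+2}$ is generated by $\overline{[s,g]}$ alone. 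The positive degree of commutativity you invoke supplies precisely the inclusion $[G_1,G_i]\le G_{i+2}$, i.e.\ $G_1\le C_G(G_i/G_{i+2})$, whereas the paper uses the sharper equality. What the paper's route buys is uniformity across all $i$ (no case split) at the cost of citing a slightly deeper structural fact; what yours buys is that it needs only the one-sided inclusion, which is exactly positive degree of commutativity, together with the elementary observation that $G_{i+1}/G_{i+2}\neq 1$.
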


  \begin{proof}
    Since $g\in G_i$, it follows that $[s,g]\in G_{i+1}$.
    It is enough to show $[s,g]\notin G_{i+2}$.
    If $[s,g]\in G_{i+2}$, let $\overline{G}=G/G_{i+2}$.
    Then $[\overline{s},\overline{g}]=1$.
    Since $[s,G_{i+1}]\in G_{i+2}$, it yields that $[\overline{s},\overline{G}_{i+1}]=1$.
    Note that $G_i=\cy{s,G_{i+1}}$.
    It means that $[\overline{s},\overline{G}_i]=1$.
    Thus $[s,G_i]\le G_{i+2}$ and so $s\in C_{G}(G_i/G_{i+2})$.
    However, by \cite[Definition 3.1.3]{Leedham} and \cite[Corollary 3.2.7]{Leedham}, we have that $C_{G}(G_i/G_{i+2})= C_{G}(G_1/G_2)=G_1$, contradicting $s\notin G_1$.
    The proof is complete.
  \end{proof}

  Let $G$ be a $p$-group of maximal class.
  Assume further that $G_1$ has class 2.
  Then $(*)$ in Section 2 implies that
  \[
  \begin{split}
  {C_{ij}}^s=C_{ij}C_{i+1,j}C_{i,j+1}C_{i+1,j+1}.
  \end{split}
  \]
  Let $c=C_{i+1,j}C_{i,j+1}C_{i+1,j+1}$.
  From Lemma \ref{p group book}, it is clear that $C_{ij}\in G_i-G_{i+1}$ if $c\in G_{i+1}-G_{i+2}$.\\



$\bf Proof~of~Theorem~A$
Let $G\in \mathcal{M}$.
We will first show that the only possibilities for cd$(G)$ is either $\{1,5,5^3\}$, $\{1,5\}$, $\{1,5,5^2\}$ or $\{1,5,5^2,5^3\}$.
Recalling that $5\in\mathrm{cd}(G)$, it is enough to show $b(G)\le 5^3$.
Since clearly $b(G)\le 5^3$ if $|G|\le 5^7$, we may assume that $|G|\ge 5^8$; and we claim that cl$(G_1)\le 2$.
By Lemma \ref{nilpotent class}, we only need to show the claim follows when $|G|=5^9$ or $5^8$.

Suppose that $|G|=5^9$.
If $|{G_1}'|= 5^5$, then $|\gamma_3(G_1)|=5^2$.
Since $|G_1:{G_1}'|=5^3$, it follows that $G_1/\gamma_3(G_1)=\cy{\overline{s}_1,\overline{s}_2,\overline{s}_3}$.
By Proposition \ref{bound} again, $|{G_1}'/\gamma_3(G_1)|\le 5^2$, which is contrary to $|{G_1}'/\gamma_3(G_1)|= 5^3$.
So $|{G_1}'|\le 5^4$ and exp$({G_1}')=5$.
It forces ${G_1}'\le G_5$.
Note that $[G_5,G_1]=[{G_1}^5,G_1]=({G_1}')^5=1$.
Therefore ${G_1}'\le G_5\le Z(G_1)$.

Suppose that $|G|=5^8$.
If  $|{G_1}'/\gamma_3(G_1)|=5^3$, by Proposition \ref{bound} again, $|G_1/{G_1}'|\ge 5^4$. It follows that $|G|\ge 5^9$, a contradiction.
So we get $|{G_1}'/\gamma_3(G_1)|\le 5^2$ and hence $|{G_1}'|\le 5^4$ by Lemma \ref{Leedham lemma2}(2).
If $|{G_1}'|\le 5^3$, then ${G_1}'\le G_5\le Z(G_1)$ and so cl$(G_1)\le 2$.
If $|{G_1}'|=5^4$, then $|\gamma_3(G_1)|=5^2$.
Let $\overline{G}=G/\gamma_3(G_1)$.
Since ${G_2}'=\cy{C_{23}}^G$,  from Lemma \ref{Leedham lemma2}(3), it follows that $C_{23}=[s_2,s_3]\in G_6=\gamma_3(G_1)$ and hence ${\overline{G}_2}'=1$.
By Lemma \ref{lemma2}, we have
\[
   {\overline{G}_1}'\le [\overline{G}_2, \cy{\overline{s}_1}]=\cy{\overline{C}_{12},\overline{C}_{13}}=\cy{\overline{C}_{12}}\times \cy{\overline{C}_{13}}.
\]
Therefore ${\overline{G}_1}'=\cy{\overline{C}_{12}}\times\cy{\overline{C}_{13}}$.
So ${G_1}'=\cy{C_{12}, C_{13}, \gamma_3(G_1)}$, where $C_{12}\in G_4-G_5, C_{13}\in G_5=Z(G_1)$.
 Set $a=C_{12}$.
Clearly $\gamma_3(G_1)=\cy{[s_1,a],[s_2,a]}$.
Since $s_4\in G_4-G_5$,
then $s_4=a^kc$ for some integer $k$ with $(k,5)=1$, where $c\in Z(G_1)=G_5$.
 Thus
${s_3}^s=s_3s_4=s_3a^kc$.
Moreover,
\[
{C_{23}}^s=[s_2s_3, s_3a^kc]=[s_2,s_3a^k]^{s_3}[s_3,s_3a^k]=[s_2,a^k]^{s_3}[s_2,s_3]^{a^ks_3}[s_3,a^k]=C_{23}[s_2,a^k],
    \]
where the final equality follows since $[s_2,s_3]\in G_5=Z(G_1)$ and $[s_3,a^k]\in G_8=1$.
Since $[s_2,a^k]\in G_7-1$, from Lemma \ref{p group book}, it follows that $C_{23}\in G_6-Z(G)$.
Set $N=[\cy{s_3},G_1]=\cy{C_{13},C_{23}}$ and $\widetilde{G_1}=G_1/N$.
Then $|N|=5^2$ and $N\le G_5=Z(G_1)$.
Furthermore, it is clear that cl$(\widetilde{G_1})=3$ since $\gamma_3(G_1)\nleq N$.
So $\widetilde{G_1}$ has an abelian maximal subgroup and thus cd$(\widetilde{G_1})=\{1,5\}$.
Since $\widetilde{Z(G)}\le \widetilde{G_1}'$,
it is easy to see that $G_1$ has an irreducible character $\theta$ of degree 5 such that $Z(G)\nleq \mathrm{ker}\theta$.
However, since $G$ is normally monomial, we have $\theta^G\in \mathrm{Irr}(G)$ by Lemma \ref{bonner}.
Obviously $\theta^G(1)=5^2$ and ${G_2}'\neq 1$.
It yields $Z(G)\le {G_2}'\le \mathrm{ker}(\theta^G)\le\mathrm{ker}\theta$, a contradiction, which means that cl$(G_1)\le 2$.
Hence, by Lemma \ref{00001}, we conclude that $b(G)\le 5^3$.

We now show that all character degree patterns in the statement of Theorem A do actually occur as irreducible character degree sets of suitable groups belonging to the class $\mathcal{M}$ of the normally monomial 5-groups of maximal class.
We refer to \cite{Slattery 2015} for an example of $H\in \mathcal{M}$ such that cd$(H)=\{1,5,5^3\}$.
Let now $G$ be the group constructed in Example 1(at the end of Section 4).
Then $G\in \mathcal{M}$ and cd$(G)=\{1, 5, 5^2, 5^3\}$.
Furthermore, by Lemma \ref{Inheritage}, for $i=1$ or 2, we have that $G/{G_i}'\in \mathcal{M}$.
Notice that $|{G_1}'|=5^2$ and $|{G_2}'|=5$ (see Example 1). Then $G_i/{G_i}'$ is an abelian subgroup of maximal order in $G/{G_i}'$.
It follows immediately that cd$(G/{G_1}')=\{1,5\}$ and cd$(G/{G_2}')=\{1,5,5^2\}$. The proof is complete.~~~~~~~~~~~~~~~~~~~~~~~~~~~~~~~~~~~~~~~~~~~~~~~~~~~~~~~~~~~~~~~~~~~~~~~~~~~~~~~~~~~~~~~~~~~~~~~~~~~~~~~~~~~~~~~~~~~~~~~~$\Box$






\section{An example of normally monomial 5-groups of maximal class}

In this section, we will construct an example of normally monomial 5-groups of maximal class with cd$(G)=\{1,5,5^2,5^3\}$.

We first consider the $p$-groups $G$ of maximal class with $|{G_1}'|=p$.
Recall that, in general, if $P$ is a $p$-group with $|P'|=p$, then $P=(A_1*A_2*\cdots*A_s)Z(P)$,
the central product, where $A_1,\cdots, A_s$ are minimal nonabelian $p$-subgroups (see \cite[Lemma 4.2]{Berkovich}); so $|P:Z(P)|=p^{2s}$ and cd$(P)=\{1, p^s\}$.

\begin{lemm}\label{3.22}
Let $G$ be a $p$-group of maximal class and of order $p^n$ with $n\ge p+2$.
If $|{G_1}'|=p$, let $|G_1/Z(G_1)|=p^{2k}$. Then $C_{ij}\neq 1$ for $i+j=2k+1$ and $C_{ij}=1$ for $i+j>2k+1$, where $i,j,k$ are positive integers.
\end{lemm}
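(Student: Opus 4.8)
The plan is to exploit the very rigid commutator structure that $|G_1'|=p$ forces. By the central‑product decomposition recalled just before the lemma, $G_1=(A_1*\cdots*A_s)Z(G_1)$ with $|G_1:Z(G_1)|=p^{2s}$, so the hypothesis $|G_1/Z(G_1)|=p^{2k}$ says precisely $s=k$ and $\mathrm{cd}(G_1)=\{1,p^k\}$. The key object is the commutator map on $\overline{G_1}:=G_1/Z(G_1)$, which is an alternating nondegenerate $\mathbb{F}_p$‑bilinear form (nondegenerate because $Z(G_1)$ is exactly the radical), valued in $G_1'\cong C_p$. So I would first translate everything into this symplectic‑space language: the images $\overline{s_1},\dots,\overline{s_{p-1}}$ (more precisely $\overline{s_i}$ for $i$ up to the point where $s_i\notin Z(G_1)$) form a filtered spanning set, and $C_{ij}=[s_i,s_j]$ records the pairing of $\overline{s_i}$ with $\overline{s_j}$.

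\textbf{The main computation.} The heart of the matter is the action of $s$, encoded in formula $(*)$ of Section~2, which on $G_1'$ (where higher commutators vanish appropriately) degenerates. First I would pin down where $Z(G_1)$ sits in the chief series: since $|G_1:Z(G_1)|=p^{2k}$ and the sections $G_i/G_{i+1}$ are cyclic of order $p$, the filtration forces $Z(G_1)\supseteq G_{2k+1}$ while $s_{2k}\notin Z(G_1)$, so $\overline{s_i}\ne 1$ exactly for $1\le i\le 2k$. Then $C_{ij}=1$ automatically whenever $i+j>2k+1$, since for such indices at least one factor lands in $Z(G_1)$: if $j\ge 2k+1$ then $s_j\in Z(G_1)$ directly, and the intermediate cases I would handle by showing $[s_i,s_j]$ lies in $[G_1,G_{2k+1-i}]$ and using that the form vanishes there. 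This gives the second assertion. For the first assertion, $C_{ij}\ne 1$ when $i+j=2k+1$, I would argue by descending induction on the ``gap'' using the conjugation identity: applying $(*)$ in the $\mathrm{cl}(G_1)=2$ form $C_{ij}^{s}=C_{ij}C_{i+1,j}C_{i,j+1}C_{i+1,j+1}$, together with Lemma~\ref{p group book} which guarantees that each $s_i$ genuinely jumps one step in the lower central series, lets me track that a nonzero pairing on the ``top'' antidiagonal $i+j=2k+1$ cannot collapse.

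\textbf{The anchoring step.} The cleanest entry point is the extreme case $C_{1,2k}$: since $\overline{s_{2k}}$ is a nonzero element of the symplectic space $\overline{G_1}$ and the form is nondegenerate, $\overline{s_{2k}}$ must pair nontrivially with \emph{something}, and the filtration structure (each $\overline{s_i}$ lying deeper than $\overline{s_{i-1}}$) forces its unique nonzero pairing partner to be $\overline{s_1}$, giving $C_{1,2k}\ne 1$. Propagating this down the antidiagonal is where I would lean on formula $(*)$: from $C_{i,2k+1-i}^s$ expressed via $C_{i+1,2k+1-i}$ (which sits on the \emph{next}, all‑trivial antidiagonal $i+j=2k+2$ and hence vanishes) one extracts a relation forcing $C_{i,2k+1-i}$ and $C_{i+1,2k-i}$ to be simultaneously nontrivial.

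\textbf{Anticipated obstacle.} I expect the main difficulty to lie not in the extreme pairing $C_{1,2k}$ but in proving the full antidiagonal $i+j=2k+1$ is nonvanishing: a priori nondegeneracy of the symplectic form only guarantees that each $\overline{s_i}$ pairs nontrivially with some vector, not specifically with $\overline{s_{2k+1-i}}$. The delicate point is that the filtration $G_1>G_2>\cdots$ respects the pairing only up to the ``degree of commutativity'' of $G$, so I must verify that $[G_i,G_j]\le G_{i+j}$ holds tightly enough (for which Lemma~\ref{Leedham lemma2}(3) on positive degree of commutativity is exactly the needed input) to conclude that the pairing has the strict antidiagonal shape $C_{ij}=0$ for $i+j>2k+1$ with a genuine pivot on $i+j=2k+1$. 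Getting that filtration‑compatibility airtight, and ruling out ``accidental'' cancellations in the expansion of $(*)$, is the real work; once it is in place the symplectic nondegeneracy converts directly into the claimed pattern.
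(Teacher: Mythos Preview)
Your overall plan—use that $|G_1'|=p$ makes the commutator pairing on $G_1/Z(G_1)$ a nondegenerate symplectic form, anchor at $C_{1,2k}\ne 1$, and propagate along the antidiagonal via the class-$2$ form of $(*)$—matches the paper's argument in substance. The paper organises it column by column: it first shows $C_{r,2k}=1$ for all $r\ge 2$ using $(*)$ together with $C_{ij}\in G_1'=Z(G)$, then deduces $C_{1,2k}\ne 1$ from $s_{2k}\notin Z(G_1)$, and finally iterates by replacing $G_1$ with $G_t$ inside $H_t=\langle G_t,s\rangle$ to treat the column $j=2k-t+1$.

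Where your write-up has a genuine gap is the vanishing assertion $C_{ij}=1$ for $i+j>2k+1$. Your claim that ``at least one factor lands in $Z(G_1)$'' fails in the intermediate range: for $C_{2,2k}$, say, neither $s_2$ nor $s_{2k}$ lies in $Z(G_1)=G_{2k+1}$, yet $C_{2,2k}$ must still be trivial. The proposed patch via $[G_1,G_{2k+1-i}]$ is not well-formed (that is a subgroup of $G_1'$, not a subspace on which ``the form vanishes''), and appealing to positive degree of commutativity is a red herring: it only yields $[G_i,G_j]\le G_{i+j+1}$, which is already implied by $[G_i,G_j]\le G_1'=G_{n-1}$ and says nothing about whether $C_{ij}=1$. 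The correct mechanism is the very identity you invoke for the nonvanishing step: since each $C_{ij}\in Z(G)$ is fixed by $s$, the relation $(*)$ collapses to $C_{i+1,j}\,C_{i,j+1}\,C_{i+1,j+1}=1$. Taking $j=2k$ and using $C_{\ast,2k+1}=1$ forces $C_{r,2k}=1$ for every $r\ge 2$; then $j=2k-1$ gives $C_{r,2k-1}=1$ for $r\ge 3$; and so on. This is exactly the paper's argument (phrased there via Lemma~\ref{p group book} as ``$C_{r-1,2k}\in Z_2(G)\setminus Z(G)$, contradiction''), after which your antidiagonal propagation $C_{r,\,2k+1-r}=C_{r-1,\,2k+2-r}^{-1}$ goes through cleanly from the anchor $C_{1,2k}\ne 1$.
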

\begin{proof}
Since $|{G_1}'|=p$ and $|G_1/Z(G_1)|=p^{2k}$, then $2k\le p-1$, $s_l\le Z(G_1)$ for $l\ge 2k+1$ and
\[
  G_1=\cy{s_1,s_2,\cdots,s_{2k}}Z(G_1).
\]
Suppose that  $C_{r,2k}\neq 1$ for some $2\le r \le 2k-1$.
Notice that ${C_{r,2k}}^s=C_{r,2k}$.
Then $C_{r,2k}\in Z(G)-1$.
Moreover,
\[
  {C_{r-1,2k}}^s=C_{r-1,2k}C_{r-1,2k+1}C_{r,2k}C_{r,2k+1}=C_{r-1,2k}C_{r,2k}.
\]
By Lemma \ref{p group book}, we get $C_{r-1,2k}\in Z_2(G)-Z(G)$, contradicting $C_{r-1,2k}\in{G_1}'=Z(G)$.
Hence
\[
  C_{1,2k}\neq1~\mathrm{and}~C_{2,2k}=C_{3,2k}=\cdots=C_{2k-1,2k}=1.
\]

Suppose that $2\le t\le k$.
It is easy to see that $t\le k\le \frac{p-1}{2}\le n-3$.
Then $H_t:=\cy{G_t,s}$ is also a $p$-group of maximal class.
We claim that $G_t=\cy{s_t,s_{t+1},\cdots,s_{2k-t+1}}Z(G_t)$.
Note that $G_2=\cy{s_2,s_3,\cdots,s_{2k}}Z(G_1)$.
It follows that $|G_2/Z(G_2)|=p^{2k-2}$ and hence $Z(G_2)=\cy{s_{2k}, Z(G_1)}$.
Thus $G_2=\cy{s_2,s_3,\cdots, s_{2k-1}}Z(G_2)$.
Therefore, $G_3=\cy{s_3,s_4,\cdots,s_{2k-1}}Z(G_2)$.
Similarly we have that $|G_3/Z(G_3)|=p^{2k-4}$ and hence $Z(G_3)=\cy{s_{2k-1}, Z(G_2)}$.
Thus $G_3=\cy{s_3,s_4,\cdots,s_{2k-2}}Z(G_3)$.
If $t>3$, using the similar argument,
we get
\[
  G_t=\cy{s_t,s_{t+1},\cdots,s_{2k-t+2}}Z(G_{t-1})=\cy{s_t,s_{t+1},\cdots,s_{2k-t+1}}Z(G_t),
  \]
   as claimed.
If $C_{l,2k-t+1}\neq1$ for some $t+1\le l\le 2k-t+1$, since ${C_{l-1,2k-t+1}}^s=C_{l-1,2k-t+1}C_{l,2k-t+1}$, from Lemma \ref{p group book}, it follows that $C_{l-1,2k-t+1}\in Z_2(G)-Z(G)$, a contradiction.
Thus
\[
  C_{t,2k-t+1}\neq 1~\mathrm{and}~C_{t+1,2k-t+1}=C_{t+2,2k-t+1}=\cdots=C_{2k-t,2k-t+1}=1.
  \]
Therefore
$C_{ij}\neq 1$ for $i+j=2k+1$ and $C_{ij}=1$ for $i+j>2k+1$.
\end{proof}


\begin{prop}\label{p3.1}
Let $G$ be a $p$-group of maximal class with $|{G_1}'|=p$. Then the following results hold.

(1) If $Z(G_1)$ is cyclic and $|G|>p^3$ $(p\neq 2)$,  then $G\notin \mathcal{M}$;

(2) If $Z(G_1)$ is not cyclic, then $G\in \mathcal{M}$.

\end{prop}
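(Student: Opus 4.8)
The plan is to analyze when such a group $G$ (with $|G_1'|=p$) is normally monomial by relating its irreducible character degrees to the structure of $Z(G_1)$. Since $|G_1'|=p$ and $G_1'=Z(G)$ for a $p$-group of maximal class, the central product decomposition recalled before Lemma \ref{3.22} applies to $G_1$: writing $|G_1/Z(G_1)|=p^{2k}$, we have $\mathrm{cd}(G_1)=\{1,p^k\}$. By Lemma \ref{bonner}, any nonlinear $\theta\in\mathrm{Irr}(G_1)$ induces irreducibly to $G$, giving a character of degree $p^{k+1}$, so $b(G)=p^{k+1}$ and $p^{k+1}\in\mathrm{cd}(G)$. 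The question of whether $G\in\mathcal{M}$ thus reduces to whether every such $\chi=\theta^G$ of degree $p^{k+1}$ can be realized as $\lambda^G$ for a linear $\lambda$ of some normal subgroup. Since $|G:G_1|=p$, the only candidate normal subgroups $N$ with $|G:N|=p^{k+1}$ and $\lambda^G$ irreducible must sit inside $G_1$; concretely I expect $N$ to be related to an index-$p^k$ abelian normal subgroup of $G$ contained in $G_1$.

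\textbf{Part (1): the cyclic case.}
First I would assume $Z(G_1)$ is cyclic and derive a contradiction to normal monomiality. The key is that if $\chi=\theta^G$ with $\theta$ nonlinear on $G_1$ is to be monomial, there must exist a normal abelian subgroup $A\unlhd G$ with $A\le G_1$, $|G_1:A|=p^k$, such that $\theta$ restricts to a sum of linear characters lying over a single linear character of $A$ extendible through the induction. Using Lemma \ref{3.22}, which pins down exactly which commutators $C_{ij}$ vanish (namely $C_{ij}=1$ for $i+j>2k+1$ and $C_{ij}\neq1$ for $i+j=2k+1$), I would identify the maximal abelian normal subgroups of $G$ inside $G_1$ and compute their orders. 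The cyclicity of $Z(G_1)$ forces $|Z(G_1)|$ to be as small as the commutator structure allows, which should make every abelian normal subgroup of $G$ contained in $G_1$ too small (index exceeding $p^k$) to support a linear character inducing to the required $\chi$. The contradiction is that the nonlinear character of degree $p^{k+1}$ then cannot be normally monomial.

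\textbf{Part (2): the noncyclic case.}
For the converse I would construct, for each $\chi\in\mathrm{Irr}(G)$, an explicit normal subgroup and linear character inducing it. Linear characters of $G$ are automatically monomial, and the nonlinear characters all have degree $p^{k+1}$ by the central product structure. Here noncyclicity of $Z(G_1)$ gives extra room: I expect to find an abelian normal subgroup $A\unlhd G$ with $A\le G_1$ and $|G:A|=p^{k+1}$, built by taking a suitable complement inside $Z(G_1)$ together with the generators $s_1,\dots$ that pair up under the commutator form described by Lemma \ref{3.22}. Concretely, using that the pairing $(i,j)\mapsto C_{ij}$ is nondegenerate precisely on the levels $i+j=2k+1$, I would select a maximal isotropic normal subgroup and extend the linear character of $Z(G)$ across it. Verifying that $\lambda^G$ is irreducible of degree $p^{k+1}$ then uses the standard criterion (the stabilizer computation), and that every nonlinear $\chi$ arises this way follows from counting.

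\textbf{Anticipated main obstacle.}
The delicate part, and where the cyclic-versus-noncyclic dichotomy really bites, is the precise bookkeeping of abelian normal subgroups of $G$ lying in $G_1$ and their interaction with $Z(G_1)$. The commutator relation $(*)$ shows that conjugation by $s$ shifts indices, so a subgroup generated by some of the $s_i$ together with part of $Z(G_1)$ is normal in $G$ only if it is closed under this shift; reconciling that $G$-invariance requirement with the abelianness and maximality conditions is the crux. In the cyclic case the shift structure leaves no invariant abelian subgroup large enough, whereas noncyclicity supplies a second central direction that can absorb the shift. Making this dichotomy rigorous—rather than merely plausible—via Lemma \ref{3.22} and Lemma \ref{p group book} is the step I expect to require the most care.
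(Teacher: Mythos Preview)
Your framework contains a genuine error that propagates through both parts. You invoke Lemma~\ref{bonner} in the setup to conclude that every nonlinear $\theta\in\mathrm{Irr}(G_1)$ induces irreducibly to $G$ and hence $b(G)=p^{k+1}$. But Lemma~\ref{bonner} has the hypothesis $G\in\mathcal{M}$, which is precisely what is in question. In fact in case~(1) the conclusion $b(G)=p^{k+1}$ is \emph{false}: the paper shows $b(G)=p^k$ there.

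For Part~(1) you miss the decisive (and short) observation. Since $Z(G_1)\unlhd G$ and a $p$-group of maximal class has no cyclic normal subgroup of order $p^2$, cyclicity of $Z(G_1)$ forces $|Z(G_1)|=p$, hence $Z(G_1)=Z(G)$ and $|G:Z(G)|=p^{2k+1}$. The bound $\chi(1)^2\le |G:Z(G)|$ then gives $b(G)\le p^{k}$. Now assume for contradiction that $G\in\mathcal{M}$; only at this point may Lemma~\ref{bonner} be applied, yielding $p^{k+1}\in\mathrm{cd}(G)$, a contradiction. Your proposed route---searching for abelian normal subgroups large enough to induce a degree-$p^{k+1}$ character and showing none exist---is aimed at the wrong target: $G_{k+1}$ \emph{is} abelian of index $p^{k+1}$ by Lemma~\ref{3.22}, so such subgroups do exist; the point is rather that no degree-$p^{k+1}$ characters exist at all.

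For Part~(2) your outline is broadly on track (the abelian normal subgroup is simply $G_{k+1}$, not an ad~hoc isotropic construction), but you again need $\mathrm{cd}(G)=\{1,p,p^{k+1}\}$ before the counting works, and you cannot get this from Lemma~\ref{bonner}. The paper's device is: $Z(G_1)$ noncyclic implies $G_1$ has no faithful irreducible character, so every nonlinear $\theta\in\mathrm{Irr}(G_1)$ has $\ker\theta\neq 1$; since $Z(G)=G_1'\nleq\ker\theta$, the kernel is not one of the $G_i$ and hence $\ker\theta\ntrianglelefteq G$. As $|G:G_1|=p$, this forces $\theta^G\in\mathrm{Irr}(G)$. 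Only after this is established does the inertia/counting argument over $G_{k+1}$ (using Lemma~\ref{3.22} to show $I_G(\lambda)=G_{k+1}$ whenever $Z(G)\nleq\ker\lambda$) finish the proof.
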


\begin{proof}

  Since $|{G_1}'|=p$, let $|G_1:Z(G_1)|=p^{2k}$. Then cd$(G_1)=\{1, p^k\}$.

 (1) Since $G$ has no normally cyclic subgroup of order $p^2$, then $|Z(G_1)|=p$ and so $Z(G_1)=Z(G)$.
 Then
 \[
   |G:Z(G)|=|G:Z(G_1)|=p^{2k+1}.
   \]
 It forces that $b(G)= p^k$.
 Suppose that $G$ is normally monomial and let $\theta\in\mathrm{Irr}(G_1)$ with $\theta(1)=p^k$.
 By Lemma \ref{bonner}, $\theta^G(1)=p^{k+1}\in \mathrm{cd}(G)$.
 This is contrary to $b(G)= p^k$.
 So $G\notin \mathcal{M}$.

(2)
Let $\theta$ be a nonlinear irreducible character of $G_1$.
We know that $Z(G)={G_1}'\nleq \mathrm{ker}\theta$.
This yields either ker$\theta=1$ or ker$\theta\ntrianglelefteq G$.
Since $Z(G_1)$ is not cyclic, then $G_1$ has no faithful irreducible character.
Therefore ker$\theta\ntrianglelefteq G$ for all nonlinear $\theta\in\mathrm{Irr}(G_1)$.
Since $|G:G_1|=p$, 
this forces $\theta^G\in\mathrm{Irr}(G)$.
Therefore cd$(G)=\{1,p,p^{k+1}\}$.

By Lemma \ref{3.22}, $G_{k+1}$ is abelian of order $p^{n-k-1}$.
Let
\[
  G_{k+1}=\cy{x_1}\times \cy{x_2}\times \cdots \times \cy{x_m}
\]
for some integer $m\le p-1$.
We may assume that ${G_1}'=\cy{a}\le \cy{x_1}$.
Let $\lambda \in \mathrm{Irr}(\cy{x_1})$ be such that $\lambda(a)\neq 1$.
Since $a^p=1$, then $\lambda^p(a)=1$.
By Lemma \ref{3.22} again, we have ${G_1}'=\cy{C_{ij}}$, where $i+j=2k+1$.
Next, we claim that for each $g\in G_1-G_{k+1}$ there is some element $b$ of $G_{k+1}$ such that $[g,b]\neq 1$.
We may assume $g\in G_i-G_{i+1}$ for $1\le i\le k$.
Then $g={s_i}^rg_1$ for $g_1\in G_{i+1}$ and some positive integer $r$ with $(r,p)=1$.
Let $b=s_{2k+1-i}$.
It follows that $b\in G_{k+1}$ and
\[
  [g,b]=[{s_i}^rg_1,s_{2k+1-i}]=[s_i,s_{2k+1-i}]^r[g_1,s_{2k+1-i}].
  \]
Since $g_1\in G_{i+1}$, then $g_1={s_{i+1}}^{t_1}{s_{i+2}}^{t_2}\cdots{s_{k}}^{t_{k-i}}g_2$ for $g_2\in G_{k+1}$ and some integers $t_j$ coprime with $p$.
It follows from Lemma \ref{3.22} that
\[
  [g_1,s_{2k+1-i}]={C_{i+1,2k+1-i}}^{t_1}{C_{i+2,2k+1-i}}^{t_2}\cdots {C_{k,2k+1-i}}^{t_{k-i}}[g_2,s_{2k+1-i}]=1.
  \]
  Thus $[g,b]=[{s_i}^rg_1,s_{2k+1-i}]=[s_i,s_{2k+1-i}]^r={C_{i,2k+1-i}}^r$.
  It implies that ${G_1}'=\cy{C_{i,2k+1-i}}=\cy{[g,b]}$ and so $[g,b]\neq 1$, as claimed.
 Therefore we have  $\lambda^g\neq \lambda$ since $\lambda^g(b)\neq \lambda(b)$.
Set $K=\cy{x_2}\times \cdots \times \cy{x_r}$.
It is easy to see that $(\lambda\times\mu)^g\neq \lambda\times\mu$ for each irreducible character $\mu$ of $K$.
Since $[s,b_1]\neq 1$ for $s\in G-G_1, b_1\in Z_2(G)-Z(G)$.
Similarly, $\lambda^s\neq \lambda$ and $(\lambda\times\mu)^s\neq \lambda\times\mu$.
Hence, the inertia group $I_G(\lambda\times\mu)$ contains with $G_{k+1}$.
Furthermore, $I_G(\lambda^t\times\mu)=G_{k+1}$ for $(t,p)=1$.
Let
$I=\{\theta\in\mathrm{Irr}(G_{k+1})~|~I_G(\theta)=G_{k+1}\}$.
Then
\[
  |I|=|G_{k+1}|-\frac{|G_{k+1}|}{p}=p^{n-k-1}-p^{n-k-2}.
  \]
 Note that $\theta^G\in \mathrm{Irr}(G)$ and $\theta^G(1)=p^{k+1}$ for every $\theta\in I$.
 So the set $\{\chi\in \mathrm{Irr}(G)~|~\chi=\theta^G ~\mathrm{for~ some}~ \theta\in I\}$ has size $\frac{|I|}{p^{k+1}}=\frac{p^{n-k-1}-p^{n-k-2}}{p^{k+1}}=p^{n-2k-2}-p^{n-2k-3}$. 
 On the other hand,
 since ${G_1}'\le \mathrm{ker}\chi$ for each $\chi \in \mathrm{Irr}(G)$ with $\chi(1)\le p$, then the number of irreducible characters of $G$ with degree $p^{k+1}$ is
\[
  \frac{(|G|-|G|/p)}{p^{2k+2}}=p^{n-2k-2}-p^{n-2k-3}.
  \]
 That is, all irreducible characters of $G$ of degree $p^{k+1}$ are induced from some irreducible character of $G_{k+1}$.
 Clearly cd$(G_1)=\{1,p^k\}$. 
 Therefore $G$ is a  normally monomial $p$-group and so $G\in \mathcal{M}$. The proof is complete.
\end{proof}

\begin{lemm}\label{lem1}
  Let $G$ be a 5-group of maximal class and of order at least $5^8$ with $\mathrm{cl}(G_1)=2$.
  Assume that $|{G_1}'|=5^2$.
 Then $G\in\mathcal{M}$ and $\mathrm{cd}(G)=\{1,5,5^2,5^3\}$ if and only if $G_3$ is abelian and $|{G_2}'|=5$.
 \end{lemm}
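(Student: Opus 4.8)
Write $\tau$ for the map $x \mapsto x^{-1}x^s$ that conjugation by $s$ induces on the normal subgroup $G_1'$; since $\mathrm{cl}(G_1)=2$ this is the additive operator $s-1$ on $G_1'$. I would first record three structural facts that hold under the standing hypotheses alone (the bound $|G|\ge 5^8$ guaranteeing regularity and the applicability of \cite[Theorem 9.6(c)]{Berkovich} and Lemma \ref{00001}). Embedding $G_1'$ in a normal subgroup of order $5^4$ and invoking \cite[Theorem 9.6(c)]{Berkovich} yields $\exp(G_1')=5$, so $G_1'$ is a two-dimensional $\mathbb{F}_5$-space. Because $[G_1',G_1]=1$, the subgroup $[G_1',G]$ is a proper $s$-invariant, hence $G$-normal, subspace of $G_1'$; it is nontrivial (else $G_1'\le Z(G)$, impossible for maximal class) and therefore contains $Z(G)$, so $[G_1',G]=Z(G)$. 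Consequently $G_1'=Z_2(G)=G_{n-2}$ and $s$ acts on $G_1'$ with $\mathrm{im}\,\tau=\ker\tau=Z(G)$.

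The backward implication is the shorter one. By Lemma \ref{00001}(1), $b(G)\le 5^3$. If $G_3$ were nonabelian it would carry a nonlinear $\theta\in\ir{G_3}$, and Lemma \ref{bonner} would give $\theta^G\in\ir{G}$ of degree $|G:G_3|\,\theta(1)\ge 5^4>b(G)$, a contradiction; hence $G_3$ is abelian. Next, $G_2'$ is an $s$-submodule of $G_1'$, so $|G_2'|\in\{1,5,5^2\}$. It is not $1$, for otherwise $G_2$ would be abelian of index $5^2$ and $b(G)\le 5^2$, against $5^3\in\cd{G}$. It is not $5^2$: as $G\in\mathcal{M}$ and $5^2\in\cd{G}$, some $\chi\in\ir{G}$ of degree $5^2$ is induced from a linear character $\mu$ of the unique normal subgroup of index $5^2$, namely $G'=G_2$, so $G_2'\le\ker\mu$ and hence $G_2'\le\ker(\mu^G)=\ker\chi$; were $G_2'=G_1'$ this would put $G_1'$ in $\ker\chi$ and force $\chi(1)\in\cd{G/G_1'}=\{1,5\}$. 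Therefore $|G_2'|=5$, i.e. $G_2'=Z(G)$.

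For the forward implication I would split $\ir{G}$ by whether $Z(G)\le\ker\chi$. From $|G_2'|=5$ one has $G_2'=Z(G)$ and $|G_1'/Z(G)|=5$, so $\overline G=G/Z(G)$ is a maximal class $5$-group whose major centralizer $\overline G_1=G_1/Z(G)$ has $|\overline G_1{}'|=5$; here $Z(\overline G_1)$ is noncyclic because it contains the image of $G_3$, which is abelian of rank $4$ (by regularity $\Phi(G_3)=(G_3)^5=G_7$, so $|G_3/\Phi(G_3)|=5^4$) and satisfies $[G_3,G_1]\le Z(G)$. Proposition \ref{p3.1}(2) then gives $\overline G\in\mathcal{M}$ with $\cd{\overline G}=\{1,5,5^2\}$, so every $\chi\in\ir{G}$ killing $Z(G)$ is monomial from a normal subgroup and has degree in $\{1,5,5^2\}$. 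For the $\chi$ faithful on $Z(G)$, I would first extract from the transport formula $(*)$ and $\mathrm{im}\,\tau=Z(G)$ the commutator data forced by the hypotheses --- $C_{23}$ generates $Z(G)$, $C_{2j}=1$ for $j\ge 4$, $C_{13}\in Z(G)$, $C_{14}=C_{23}^{-1}\ne1$ and $C_{1j}=1$ for $j\ge5$ --- and then check that for $\psi\in\ir{G_1}$ nontrivial on $Z(G)$ the alternating form $\lambda\circ[\,\cdot\,,\cdot\,]$ on $G_1/Z(G_1)$ has nonzero Pfaffian, so $\psi(1)=5^2$; since such $\psi$ is not $s$-invariant ($\lambda^s\ne\lambda$ on $Z(G)=\mathrm{im}\,\tau$), $\chi=\psi^G$ has degree $5^3$. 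Finally $G_3$ is abelian and normal, so $\chi(1)\mid|G:G_3|=5^3$ (\cite[Theorem 6.15]{Isaacs}); choosing a linear constituent $\mu$ of $\chi_{G_3}$, the quotient $G_3/\ker\mu$ is central in $I_G(\mu)/\ker\mu$, whence the Clifford correspondent of $\chi$ over $\mu$, of degree $\chi(1)/|G:I_G(\mu)|$, has that degree squared at most $|I_G(\mu):G_3|$; together with $\chi(1)=5^3=|G:G_3|$ this forces $I_G(\mu)=G_3$ and $\chi=\mu^G$. Hence every irreducible character of $G$ is induced from a linear character of a normal subgroup, $G\in\mathcal{M}$, and $\cd{G}=\{1,5,5^2,5^3\}$.

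The crux is the forward direction, and within it the claim that the characters faithful on $Z(G)$ have degree exactly $5^3$: this rests on pinning down the commutators from $(*)$ and $\mathrm{im}\,\tau=Z(G)$, and on the Pfaffian of the resulting $4\times4$ alternating form becoming nonzero precisely when $\lambda$ is nontrivial on $Z(G)$. A secondary point is verifying that $Z(\overline G_1)$ is noncyclic so that Proposition \ref{p3.1}(2) applies. By contrast the backward direction and the monomiality of the degree-$5^3$ characters are comparatively routine, the latter following from It\^o's theorem and the central-character degree bound once $G_3$ is known to be abelian.
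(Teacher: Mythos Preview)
Your argument is correct, with one cosmetic slip: you have the labels ``forward'' and ``backward'' interchanged. What you call the backward implication assumes $G\in\mathcal{M}$ and $\cd G=\{1,5,5^2,5^3\}$ and deduces the conditions on $G_3$ and $G_2'$; that is the direction $\Rightarrow$ as the lemma is stated.

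For $\Rightarrow$ your proof and the paper's coincide. For $\Leftarrow$ the two diverge after reaching $\overline G=G/Z(G)\in\mathcal{M}$ via Proposition~\ref{p3.1}(2). The paper verifies noncyclicity of $Z(\overline G_1)$ by the generator bound $|\overline G_1/Z(\overline G_1)|\le 5^4$, whereas you go through $\overline G_3\le Z(\overline G_1)$ (which does follow from the commutator data you list, since those force $[G_3,\langle s_1\rangle]\le Z(G)$). The more substantial difference concerns the characters $\chi$ with $Z(G)\nleq\ker\chi$. You compute $\psi(1)=5^2$ for their $G_1$-constituents via the Pfaffian of $\lambda\circ[\,\cdot\,,\cdot\,]$ (implicitly using that your commutator relations give $Z(G_1)=G_5$, so the form lives on a four-dimensional space), and then force $I_G(\mu)=G_3$ by the It\^o/Clifford degree inequality. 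The paper instead proves directly, by exhibiting for each $g\in G_1\setminus G_3$ an element $b\in G_3$ with $\langle[g,b]\rangle=Z(G)$, that every linear $\mu\in\ir{G_3}$ nontrivial on $Z(G)$ has $I_G(\mu)=G_3$, and then \emph{counts}: the number $(|G_3|-|G_3|/5)/5^3$ of distinct $\mu^G$ equals the maximum possible number $(|G|-|G|/5)/5^6$ of irreducibles not killing $Z(G)$, so every such $\chi$ has degree $5^3$ and is monomial from $G_3$. Your approach is more structural and avoids the bookkeeping; the paper's avoids identifying $Z(G_1)$ and the bilinear-form machinery.
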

 \begin{proof}
  If $G\in\mathcal{M}$ and $b(G)=5^3$,  it follows that ${G_3}'=1$ and ${G_2}'\neq 1$.
   Let $\chi \in \mathrm{Irr}(G)$ be such that $\chi(1)=5^2$.
   Then there is a linear character $\lambda \in \mathrm{Irr}(G_2)$ such that $\chi=\lambda^G$ and thus ${G_2}'\le (\mathrm{ker}\lambda)_G=\mathrm{ker}\chi$.
   So $\chi(1)=5^2\in \mathrm{cd}(G/{G_2}')$.
   Suppose that $|{G_2}'|=5^2$.
   Then ${G_1}'={G_2}'$ and so $G/{G_2}'$ has an abelian normal maximal subgroup $G_1/{G_2}'$.
   This yields cd$(G/{G_2}')=\{1,5\}$, which is contrary to $5^2\in \mathrm{cd}(G/{G_2}')$.
   Therefore, $|{G_2}'|=5$.

  Conversely, if $G_3$ is abelian and $|{G_2}'|=5$, then ${G_2}'=[\cy{s_2}, G_3]$.
  Note that ${C_{24}}^s=C_{24}C_{25}C_{34}C_{35}=C_{24}$ and  ${C_{23}}^s=C_{23}C_{24}$.
  It follows from Lemma \ref{p group book} that $C_{24}=1$ and $Z(G)=\cy{C_{23}}={G_2}'$.
  Let $\overline{G}=G/Z(G)$.
  Then $|{\overline{G}_1}'|=5$ and hence $|\overline{G}_1/Z(\overline{G}_1)|\le 5^4$.
  Since $|\overline{G}_1|\ge 5^6$, then $|Z(\overline{G}_1)|\ge 5^2$ and so $Z(\overline{G}_1)$ is not cyclic.
  By Proposition \ref{p3.1}(2), it follows that $\overline{G}=G/Z(G)\in\mathcal{M}$ and $b(G/Z(G))=5^2$.

 Notice that ${C_{14}}^s=C_{14}C_{24}C_{15}C_{25}=C_{14}$.
 Then $C_{14}\in Z(G)$.
 Suppose that $C_{14}=1$.
 Since ${C_{13}}^s=C_{13}C_{23}C_{14}C_{24}=C_{13}C_{23}$, then $C_{13}\in Z_2(G)-Z(G)$.
 Since ${C_{12}}^s=C_{12}C_{13}C_{23}$ and $C_{13}C_{23}\in Z_2(G)-Z(G)$, then $C_{12}\in Z_3(G)-Z_2(G)$, which contradicts $|{G_1}'|=5^2$.
 Hence $C_{14}\neq1$ and so $Z(G)=\cy{C_{14}}=\cy{C_{23}}$.
 Next we claim that, for any $g\in G_1-G_3$, there exists an element $b\in G_3$ such that $Z(G)=\cy{[g,b]}$.
 If $g\in G_1-G_2$, then $g={s_1}^i{s_2}^ja_3$ where $(i,5)=(j,5)=1$ and $a_3\in G_3$. Thus
 \[
   [g,s_4]=[s_1, s_4]^i[s_2,s_4]^j[a_3,s_4]=[s_1,s_4]^i={C_{14}}^i.
   \]
  Let $b=s_4$. So $Z(G)=\cy{C_{14}}=\cy{[g,b]}$.
  If $g\in G_2-G_3$, similarly we have $g={s_2}^lb_3$ where $(l,5)=1$ and $b_3\in G_3$.
  Furthermore,
  \[
    [g,s_3]=[s_2,s_3]^l[b_3,s_3]=[s_2,s_3]^l={C_{23}}^l.
    \]
    Let $b=s_3$.
    So $Z(G)=\cy{C_{23}}=\cy{[g,b]}$ and the claim follows.
 Set $\lambda\in \mathrm{Irr}(G_3)$ such that $Z(G)\nleq$ ker$\lambda$.
 By the claim $[g,b]\notin \mathrm{ker}\lambda$.
 It follows that $\lambda^g\neq \lambda$ since $\lambda^g(b)\neq \lambda(b)$.
 This implies that $I_G(\lambda)=G_3$.
 Set $I=\{\lambda\in \mathrm{Irr}(G_3)~|~I_G(\lambda)=G_3, Z(G)\nleq \mathrm{ker}\lambda\}$ and $S=\{\lambda^G|\lambda\in I\}$.
 Then $S\subseteq \mathrm{Irr}(G)$. Moreover,
 \[
    |I|=|G_3|-\frac{|G_3|}{5}~ \mathrm{and}~|S|=\frac{|I|}{|G:G_3|}=\frac{|G_3|}{5^3}-\frac{|G_3|}{5^4}.
   \]
Therefore, $G$ has $|S|$ irreducible characters of degree $5^3$ that are induced from $G_3$.
 Note that $b(G/Z(G))=5^2$.
 Hence there are at most
 $\frac{|G|-|G/Z(G)|}{5^6}$ irreducible characters of degree $5^3$.
 Since $|G:G_3|=5^3$, then
 \[
   \frac{|G|-|G/Z(G)|}{5^6}=\frac{|G_3|}{5^3}-\frac{|G_3|}{5^4}=|S|,
   \]
   which forces $\chi(1)=5^3$ for any $\chi\in\mathrm{Irr}(G)-\mathrm{Irr}(G/Z(G))$ and each such character can be induced from $G_3$.
 Thus $G\in \mathcal{M}$ with cd$(G)=\{1,5,5^2,5^3\}$.
 \end{proof}


We now give an example of normally monomial $5$-groups $G$ of maximal class with  cd$(G)=\{1, 5, 5^2, 5^3\}$.

~\\
{\bf Example~1.} Let $N=\cy{s_1,s_2, s_3,s_4}$ be a $5$-group of class $2$, where   $N'=\cy{[s_1,s_2]}\times \cy{[s_1,s_4]}\le Z(N)$, $[s_1,s_4]=[s_2,s_3]^{-1}={s_3}^{-5}$, $[s_1,s_2]={s_2}^5$, ${s_1}^{5^2}={s_2}^{5^2}={s_3}^{5^2}={s_4}^5=1=[s_1,s_3]=[s_2,s_4]=[s_3,s_4]$.
Then $|N|=5^7$.
Let $\cy{s}$ be a cyclic subgroup, which acts on $N$ in the following way:
\[
  {s_i}^s=s_is_{i+1} ~\mathrm{for}~ i=1, 2, 3, {s_4}^s=s_4{s_1}^{-5}{s_2}^{-10}{s_3}^{-10}~\mathrm{and}~ (ab)^s=a^sb^s, \forall a,b\in N.
\]

Note that
\[
[s_1,s_2]^s=[s_1s_2, s_2s_3]=[s_1,s_2][s_2,s_3]={s_2}^{5}{s_3}^5=({s_2}^{5})^s;
\]
  \[
[s_1,s_4]^s=[s_1s_2, s_4{s_1}^{-5}{s_2}^{-10}{s_3}^{-10}]=[s_1,s_4][s_2,s_4]={s_3}^{-5}=({s_3}^{-5})^s;
\]
\[
[s_1,s_3]^s=[s_1s_2, s_3s_4]=[s_1,s_4][s_2,s_3]=1~\mathrm{and,~similarly,}~[s_2,s_4]^s=[s_3,s_4]^s=1.
\]
Hence $\cy{s}\le \mathrm{Aut}(N)$.
Next we show that $|s|=5$.
It is enough to show that ${s_1}^{s^5}=s_1$.
Since ${s_1}^s=s_1s_2$, then
 ${s_1}^{s^2}=s_1{s_2}^2s_3$ and
${s_1}^{s^3}=s_1{s_2}^3{s_3}^3s_4{s_3}^{-5}$.
Further, since $(s_is_{i+1})^n={s_i}^n{s_{i+1}}^n[s_{i+1},s_i]^{\frac{n(n-1)}{2}}$ for positive integer $n$, then
\[
  {s_1}^{s^4}=({s_1}^{s^3})^s=(s_1s_2)(s_2s_3)^3(s_3s_4)^3(s_4{s_1}^{-5}{s_2}^{-10}{s_3}^{-10}){s_3}^{-5}=s_1{s_2}^4{s_3}^6{s_4}^4{s_1}^{-5}{s_2}^{-10}{s_3}^{-5}.
  \]
  Moreover,
\[
  \begin{split}
  {s_1}^{s^5}&=({s_1}^{s^4})^s=(s_1s_2)(s_2s_3)^4(s_3s_4)^6(s_4{s_1}^{-5}{s_2}^{-10}{s_3}^{-10})^4(s_1s_2)^{-5}(s_2s_3)^{-10}{s_3}^{-5}\\
  &=s_1{s_2}^5{s_3}^{10}{s_4}^{10}{s_1}^{-20}{s_2}^{-40}{s_3}^{-40}
  {s_1}^{-5}{s_2}^{-5} {s_2}^{-10}{s_3}^{-10}{s_3}^{-5}{s_3}^{-30}\\
  &=s_1{s_2}^5{s_3}^{10}{s_2}^{-5}{s_3}^{-10}=s_1.
  \end{split}
   \]
Therefore, $|s|=5$.
 Let $G=N\rtimes \cy{s}$ be the semidirect product of $N$ and $\cy{s}$. Then $G=\cy{s,s_1}$  is a $5$-group of maximal class with order $5^8$.
 It is clear that ${G_3}'=\cy{[s_3,s_4]}^G=1$,
 ${G_2}'=\cy{[s_2,s_3]}^G=\cy{{s_3}^{-5}}$ and ${G_1}'=\cy{[s_1,s_2]}^G=\cy{{s_2}^5}\times\cy{{s_3}^{-5}}$.
 So $|{G_2}'|=5$ and $|{G_1}'|=5^2$.
 It follows from Lemma \ref{lem1} that $G\in\mathcal{M}$ and cd$(G)=\{1, 5, 5^2, 5^3\}$.~~~~~~~~~~~~~~~~~~~~~~~~~~~~~~~~~~~~~~~~$\Box$\\

{\bf Acknowledgement:}
 The authors would like to thank Professor Silvio Dolfi for his valuable comments.
 The first author is supported by China Scholarship Council (CSC), whose support is very much appreciated.

\end{document}